\newtheorem{thm}{Theorem}[section]
\newtheorem{cor}{Corollary}[section]
\newtheorem{lem}{Lemma}[section]
\newtheorem{prop}{Proposition}[section]
\newtheorem{exa}{Example}
\theoremstyle{definition}
\newtheorem{defn}{Definition}[section]
\newtheorem{rem}{Remark}[section]
\newcommand{\be}{\begin{equation}}
\newcommand{\ee}{\end{equation}}
\newcommand\bes{\begin{eqnarray}}
 \newcommand\ees{\end{eqnarray}}
\newcommand{\bess}{\begin{eqnarray*}}
\newcommand{\eess}{\end{eqnarray*}}
 \numberwithin{equation}{section}
\begin{document}
\title{ On propositional logic semirings }

\maketitle
\begin{center}
{\sl  Wenxi LI\footnote{  E-mail address: wxli@ahut.edu.cn.}  \ Zhongzhi WANG\footnote{ E-mail address: zhongzhiw@126.com.} }

\

( School of Microelectronics and Data Science, AnHui University of
Technology, Ma'anshan, 243002, China)

\

\end{center}

 \indent {\bf Abstract:}
        {\small }Propositional logic serves as a fundamental cornerstone in mathematical logic. This paper delves into a semiring characterization of propositional logic, employing the Gr\"oebner-Shirshov basis theory to furnish an algebraic framework for deduction and proof grounded in atoms of propositional logic. The result is an algebraic approach to proving propositions in propositional logic. To illustrate the effectiveness and constraints of this method, we conclude with several specific examples.

\
{\bf{Key words:}} Propositional logic,\ \ Gr\"oebner-Shirshov bases,\ \ semirings

{\bf{Mathematics Subject Classification (2020):}}
03B05, 13P10, 16Y60.
\section{\bf Introduction}
In a letter to Nicolas Remnond\cite{Leibniz1969}, Leibniz expressed, '$\cdots$ if I had been less distracted or if I were younger or had talented young men to help me, I should still hope to create a kind of universal symbolistic in which all truths of reason would be
reduced to a kind of calculus. $\cdots$ this could be a kind of universal language or writing
 for the characters and the words themselves would give directions to reason, and the
errors -except those of fact- would be only mistakes in calculation.'  This statement is a key component of what is known as Leibniz's Program -a comprehensive initiative aimed at promoting peace and justice through the formalization or mechanization of reason\cite{AndersonZeleny2001}. Propositional calculus and predicate calculus are two fundamental formal systems that constitute this exciting program, serving as mathematical models for the study of reasoning.

However,  the conjunction and disjunction operations of propositions satisfy the requirements of a semiring operation. Therefore, the propositional logic system with the conjunction and disjunction operations forms a semiring, where each element in this semiring has a complement. In our previous paper, we introduced idempotent complement semirings. In this paper, we attempt to represent the propositional logic system using an idempotent complement semiring generated by countable elements. Our goal is to obtain an algebraic representation for proofs and deductions, and we employ the Gr\"obner-Shirshov basis theory to provide an algebraic characterization of propositions provable within a propositional system. We have developed a method for proving propositions through set calculations. In conclusion, we present several simple examples to illustrate and explain this method. Unfortunately, it appears that this approach does not significantly contribute to reducing the complexity of problem proofs.

We use $\mathbb{N}$($\mathbb{N}^+$) to  denote  the set of (positive) natural numbers. Concepts and symbols related to mathematical logic in this paper, which are not explicitly explained, can be referred to\cite{EFT2021}. Undefined definitions and conclusions regarding the Gr\"obner-Shirshov (abbreviated as G-S) bases for semirings in this paper can be found in the reference\cite{Bokut2013}.

\section{\bf   Gr\"{o}bner-Shirshov bases for finitly generated semirings  }
In a previous paper\cite{NLW}, we provided the G-S bases for finitely generated idempotent complement semirings. In this section, for the sake of readability, we list relevant concepts and conclusions without presenting the proofs.

\begin{defn}\label{Definition 2.1}\cite{Golan}
A semiring is a nonempty set $S$ on which operations of "$\circ $" and multiplication $"\cdot"$ have been defined such that the following conditions are satisfied:

1) $(S , \circ)$ is a commutative monoid with identity element $\theta$.

2) $(S , \cdot)$ is a monoid with identity element $1$.

3) "$\cdot$" is distributive relative to $\circ$ form left and right.

4) $\theta \cdot s = s \cdot \theta = \theta$,  for all  $s \in S$.

5) $1_{S} \neq \theta$.

If $(S , \cdot)$ is commutative, then $S$ is called a commutative semiring.
\end{defn}

Let $X=\{x_{1},\cdots ,x_{n}\}, X^{c}=\{x^{c}_{1},\cdots,x^{c}_{n}\}$, $\tilde{X} = X\cup X^{c}$ and $Rig[\tilde{X}]$ be the free commutative semiring generated by $\tilde{X}$. We define a congruence relation $\equiv_{\rho}$ in $Rig[\tilde{X}]$ generated by $\{(x_{i}\cdot x^{c}_{i},\theta),(x_{i}\circ x^{c}_{i},1),(x_{i}\circ x_{i},x_{i}),(x^{c}_{i}\circ x^{c}_{i},x^{c}_{i})\}.$  and  denote $Rig[\tilde{X}]/\equiv_{\rho}$ by $S_{c}[X]$.

To study the G-S basis of  $S_{c}[X]$, we  introduce a set $Y=\{y_{0},\cdots ,y_{m-1}\}(m=2^{n})$, where  $y_{k}=x^{k_{1}}_{1}\cdot \cdots \cdot x^{k_{n}}_{n} \in Rig[\tilde{X}]$  and
\be
x^{k_i}_{i}=\left\{
\begin{aligned}
x_{i}  \quad {k_i=1}\\
x^{c}_{i}  \quad {k_i=0}\\
\end{aligned}
\right
.
\ee
for $k=\sum^{n}\limits_{i=1}k_{i}2^{i-1}$, $k_{i}\in \{0,1\}$.

Denote  $\sum^{\circ}$  as the sum of $"\circ"$ operation, and
\be
\begin{aligned}
 A_{i}^{(n)}=\{\sum^{n}\limits_{j=1}k_{j}2^{j-1}|k_{i}=1, k_{1},\cdots,k_{i-1},k_{i+1},\cdots,k_{n}\in \{0,1\}\},
\end{aligned}
\ee
\be
\begin{aligned}
A^{c(n)}_{i}=\{\sum^{n}\limits_{j=1}k_{j}2^{j-1}|k_{i}=0, k_{1},\cdots,k_{i-1},k_{i+1},\cdots,k_{n}\in \{0,1\}\}.
\end{aligned}
\ee

A  monomial order in $Rig[\tilde{X}\cup Y]$ defined as following:

\begin{align*}
x_{1}>x_{2}>\cdots >x_{n}>x^{c}_{1}>x^{c}_{2}>\cdots >x^{c}_{n}>y_{0}>\cdots >y_{m-1}
\end{align*}

Then for  any element in $[\tilde{X}\cup Y]$ with form $u=a_{1}\cdot a_{2}\cdot \cdots \cdot a_{l}$ ($ a_{1}\geq a_{2}\geq \cdots \geq a_{l}$, and $u=1$ iff $l=0$),
we order $[\tilde{X}\cup Y]$ as follows: for any $a,b\in [\tilde{X}\cup Y]$, if one of the sequences is not a prefix of orther, then lexicographically; if the sequence of $a$ is a prefix of the sequence of $b$, then $a<b$.\\
\indent For any  $w=u_{1}\circ u_{2}\circ \cdots \circ u_{k}\in Rig[\tilde{X}\cup Y]$ where $u_{i} \in [\tilde{X}\cup Y]$ and$ u_{1}\geq u_{2}\geq \cdots \geq u_{k}$. We order $Rig[\tilde{X}\cup Y]$  lexicographically.
\begin{thm}\label{Theorem 3.1}\cite{NLW}
Let the ordering on $Rig[\tilde{X}\cup Y]$ be as above. Then $kRig[\tilde{X}\cup Y|\{(x_{i}\cdot x^{c}_{i},\theta),(x_{i}\circ x^{c}_{i},1),(x_{i}\circ x_{i},x_{i}),(x^{c}_{i}\circ x^{c}_{i},x^{c}_{i}),(y_{k},x^{k_{1}}_{1}\cdot \cdots \cdot x^{k_{n}}_{n})\}]= kRig[\tilde{X}\cup Y|R_{1}]$ and $R_{1}$ is a Gr\"{o}bner-Shirshov basis in $kRig[\tilde{X}\cup Y]$, where $R_{1}$ consists of the following relations:

$(r_{1})$. $y_{k}\cdot y_{k}=y_{k},$

$(r_{2})$. $y_{k}\cdot y_{j}=\theta$ $(k\neq j)$,

$(r_{3})$. $y_{k}\circ y_{k}=y_{k}$,

$(r_{4})$. $x_{i}=\mathop{\sum^{\circ}}\limits_{j\in A_{i}^{(n)}} y_{j}$ $(i\in\{1,2,\cdots ,n\})$, where $A_{i}^{(n)}$ is defined as in (2.2),

$(r_{5})$. $x^{c}_{i}=\mathop{\sum^{\circ}}\limits_{j\in A^{c(n)}_{i}}y_{j}$ $(i\in \{1,2,\cdots ,n\})$, where $A^{c(n)}_{i}$ is defined as in (2.3),

$(r_{6})$. $\mathop{{\sum}^{\circ}}\limits_{j\in D}y_{j}=1$,

$(r_{7})$. $1\circ y_{k}=1$,

$(r_{8})$. $1\circ 1=1$.
\end{thm}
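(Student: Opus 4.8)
The plan is to invoke the Composition--Diamond Lemma for semirings (see \cite{Bokut2013}): to prove the statement it suffices to check three things. First, that the displayed ordering really is a monomial ordering for \emph{both} operations ``$\circ$'' and ``$\cdot$'' on $Rig[\tilde{X}\cup Y]$ and is a well-ordering on reducible words, so that reductions terminate. Second, that the two-sided congruence on $kRig[\tilde{X}\cup Y]$ generated by the original defining relations $\{(x_{i}x^{c}_{i},\theta),(x_{i}\circ x^{c}_{i},1),(x_{i}\circ x_{i},x_{i}),(x^{c}_{i}\circ x^{c}_{i},x^{c}_{i}),(y_{k},x^{k_{1}}_{1}\cdots x^{k_{n}}_{n})\}$ coincides with the congruence generated by $R_{1}$. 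Third, that every composition (ambiguity) between leading words of members of $R_{1}$ is trivial modulo $R_{1}$. Granting all three, the irreducible words form a $k$-basis of $kRig[\tilde{X}\cup Y|R_{1}]$ and the rewriting is confluent, which yields both assertions of the theorem.

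For the congruence equality I would derive each relation of $R_{1}$ from the original relations and conversely. The elementary facts needed are: multiplication is idempotent on generators, since $x_{i}=x_{i}\cdot 1=x_{i}(x_{i}\circ x^{c}_{i})=x_{i}x_{i}\circ x_{i}x^{c}_{i}=x_{i}x_{i}\circ\theta=x_{i}x_{i}$ (and likewise for $x^{c}_{i}$), whence $y_{k}y_{k}=y_{k}$, which is $(r_{1})$; if $k\neq j$ then $k$ and $j$ differ in some bit $i$, so $y_{k}y_{j}$ carries a factor $x_{i}x^{c}_{i}=\theta$ and hence vanishes, which is $(r_{2})$; expanding $x_{i}=x_{i}\prod_{\ell\neq i}(x_{\ell}\circ x^{c}_{\ell})$ and distributing yields $(r_{4})$, symmetrically $(r_{5})$, while $1=\prod_{\ell}(x_{\ell}\circ x^{c}_{\ell})$ distributes to $\sum^{\circ}_{j\in D}y_{j}$, which is $(r_{6})$; and additive idempotency propagates from the generator relations, first as $1\circ 1=(x_{1}\circ x^{c}_{1})\circ(x_{1}\circ x^{c}_{1})=x_{1}\circ x^{c}_{1}=1$ (that is $(r_{8})$), then $y_{k}\circ y_{k}=y_{k}\cdot 1\circ y_{k}\cdot 1=y_{k}(1\circ 1)=y_{k}$ (that is $(r_{3})$), then $1\circ y_{k}=1$ by absorbing $y_{k}$ into the expansion $(r_{6})$ of $1$ (that is $(r_{7})$). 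Conversely, substituting $(r_{4})$--$(r_{6})$ into the left sides of the original relations and collapsing the resulting products and sums of minterm sums by $(r_{2})$, $(r_{1})$, $(r_{3})$, $(r_{6})$ recovers those relations, and reversing the expansions recovers $y_{k}=x^{k_{1}}_{1}\cdots x^{k_{n}}_{n}$.

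For the composition check, the leading words of $R_{1}$ are $y_{k}\cdot y_{k}$ for $(r_{1})$, $y_{k}\cdot y_{j}$ for $(r_{2})$, $y_{k}\circ y_{k}$ for $(r_{3})$, the single letters $x_{i}$ and $x^{c}_{i}$ for $(r_{4})$ and $(r_{5})$, the full ``$\circ$''-sum $\sum^{\circ}_{j\in D}y_{j}$ for $(r_{6})$ (here $1<y_{0}$, so this sum is indeed the leading term), $1\circ y_{k}$ for $(r_{7})$, and $1\circ 1$ for $(r_{8})$. Since $x_{i}$ and $x^{c}_{i}$ share no letter with any other leading word, $(r_{4})$ and $(r_{5})$ contribute no nontrivial composition, so it remains to treat the multiplicative overlaps among $(r_{1})$ and $(r_{2})$ (typical ambiguities $y_{k}y_{k}y_{j}$ and $y_{k}y_{j}y_{t}$), the additive overlaps among $(r_{3})$, $(r_{6})$, $(r_{7})$, $(r_{8})$ (overlaps of ``$\circ$''-sums of the $y$'s and of $1$), and the ambiguities forced by distributivity that couple a ``$\circ$''-sum to a $\cdot$-product. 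In each case one forms the ambiguity, performs both reductions, and observes that the discrepancy collapses: an off-diagonal product $y_{j}y_{t}$ with $j\neq t$ vanishes by $(r_{2})$; a repeated factor $y_{k}y_{k}$ or a repeated summand $y_{k}\circ y_{k}$ contracts by $(r_{1})$, $(r_{3})$; a ``$\circ$''-sum exhausting $D$ becomes $1$ by $(r_{6})$; and any $y_{k}$ adjacent to $1$ under ``$\circ$'' is absorbed by $(r_{7})$, and $1\circ 1$ by $(r_{8})$. The one structural point making this uniform is that the distributed product of two minterm sums $\bigl(\sum^{\circ}_{P}y_{j}\bigr)\bigl(\sum^{\circ}_{Q}y_{t}\bigr)$ reduces, via $(r_{2})$ and $(r_{1})$, to $\sum^{\circ}_{P\cap Q}y_{j}$, which is again of admissible form, so no new ambiguity survives.

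I expect the main obstacle to be the systematic enumeration of the ambiguities among $(r_{1})$--$(r_{3})$ and $(r_{6})$--$(r_{8})$ together with the distributivity-induced ones: the long ``$\circ$''-sum $\sum^{\circ}_{j\in D}y_{j}$ of $(r_{6})$ overlaps, as a sub-sum, with the leading words of $(r_{3})$, $(r_{7})$, $(r_{8})$ in many configurations, so the case list is long even though each individual verification reduces to the same routine collapse described above, and care is required not to miss a case or to misidentify which side of a relation is the leading term under the given order. The logically prior but comparatively easy point is the verification that the displayed ordering is a monomial well-ordering for both operations, which is what legitimizes the use of the Composition--Diamond Lemma for semirings \cite{Bokut2013}; granting this together with the triviality of all compositions, the conclusion follows.
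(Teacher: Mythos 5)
First, a point of reference: the paper itself gives no proof of this theorem --- it is quoted from the earlier paper \cite{NLW} (``we list relevant concepts and conclusions without presenting the proofs''), so your attempt can only be measured against the expected argument there, which is indeed the Composition--Diamond Lemma for commutative semirings of \cite{Bokut2013} that you invoke. Your reduction of the task to (i) checking the monomial well-ordering, (ii) showing the congruence (ideal) generated by the defining relations coincides with that generated by $R_{1}$, and (iii) verifying triviality of all compositions is the right frame, and your derivations of $(r_{1})$--$(r_{8})$ from the defining relations (and the converse substitutions) are correct as written, including the identification of the leading terms of $(r_{4})$--$(r_{8})$ under the stated order.

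The genuine gap is in (iii): for a Gr\"obner--Shirshov basis theorem the composition check \emph{is} the proof, and you only describe the pattern you expect the collapses to follow rather than enumerating the compositions prescribed by the CD-lemma for commutative semirings and reducing them. Two specific points make this more than bookkeeping. First, dismissing $(r_{4})$, $(r_{5})$ because their leading words $x_{i}$, $x^{c}_{i}$ ``share no letter with any other leading word'' is an appeal to a coprimality criterion (Buchberger's first criterion) that you have not shown to hold in the semiring setting of \cite{Bokut2013}: there the ``monomials'' are themselves $\circ$-sums of $\cdot$-products, compositions are defined through occurrences of leading terms inside elements of the free commutative semiring, and ambiguities can arise through the $\circ$-structure and distributivity even when the $\cdot$-supports are disjoint (for instance $x_{i}$ occurring in $x_{i}\cdot y_{k}\cdot y_{k}$, or inside a $\circ$-sum that also contains the leading sum of $(r_{6})$); these cases must either be reduced explicitly or excluded by a criterion you prove. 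Second, you never fix which composition types (intersection, inclusion, the ones induced by multiplication and addition) the CD-lemma for semirings actually requires, so the ``long case list'' you acknowledge is not merely unexecuted but unspecified. As it stands, your argument establishes the equality of the two presentations but takes the confluence statement --- the substance of the theorem --- on faith.
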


Therefore a  normal form of the semiring $S_{c}[X]$ is the set
$$\{1, {\sum\limits_{k\in D}}^{\circ}y_{k}\},$$
where $D\varsubsetneq \{0,1,\cdots ,m-1\}.$

\begin{defn}\label{Definition 3.1}\cite{NLW}
Let $S$ be a comutative semiring with operations $"\circ"$ and multiplication $"\cdot"$. If for any $s\in S$, there exists $t\in S$ such that the following conditions are satisfied:

(1) $s\cdot t=\theta$.

(2) $s\circ t=1$.

(3) $s\cdot s=s$.

(4) $s\circ s=s$.

Then we call $S$ is an idempotent complement(I-C) semiring, where $t$ is the complement of $s$, and denote $t=s^{c}$.
\end{defn}

From the normal form of $S_{c}[X]$, it is easy to prove that $S_{c}[X]$ is an I-C semiring, and it is a free idempotent complement semiring generated by $X$\cite{NLW}.

\section{\bf Idempotent complement semirings}
Typically, a propositional logic system is composed of countably many atoms. Therefore, in this section, we consider  countably generated free I-C semirings.

Suppose $X=\{x_{1},\cdots ,x_{n},\cdots\}$ be a countable set,  let $X^c=\{x_{1}^c,\cdots ,x_{n}^c,\cdots\}$, $\tilde{X}=X\cup X^c$, and $X_{N}=\{x_{1},\cdots,x_{N}\}$ for any $N\in \mathbb{N}^+$. Denote $S_c[X_N]$ the free I-C semiring generated by $X_N$ .

Denote $Y=\{y_{0}^{(1)},y_{1}^{(1)}\cdots,y_{0}^{(n)},\cdots,y_{m}^{(n)},\cdots\}$, where $m=2^{n}-1, n\in \mathbb{N}^+$,  and for each $k=\sum^{n}\limits_{l=1}j_{l}2^{l-1}$, $j_{l}\in \{0,1\}$ and $y_{k}^{(n)}=x^{j_{1}}_{1}\cdot \cdots \cdot x^{j_{n}}_{n} \in Rig[\tilde{X}]$, where \\
\be
x^{j}_{i}=\left\{
\begin{aligned}
x_{i}  \quad {j=1}\\
x^{c}_{i}  \quad {j=0}\\
\end{aligned}
\right
.
\ee
We first give a direct system  $(S_c[X_N],f_{NM})$  as following:

For any $N\leq M$, there exists a natural embedding  monomorphism $i_{NM}$ from $Rig[X_N\cup X_N^c]$ to $Rig[X_M\cup X_M^c]$, where $X_{N}^c=\{x_{1}^c,\cdots,x_{N}^c\}$ and  $Rig[X_N\cup X_N^c]$ the
free commutative semiring generated by $X_N\cup X_N^c$.

Since $S_c[X_k]=Rig[X_k\cup X_k^c]/\equiv_k$, where $\equiv_k$ the congruence relation in $Rig[X_k\cup X_k^c]$ generated by $\{(x_i\cdot x_i^c,\theta),(x_i\circ x_i^c,1),(x_i\circ x_i,x_i),(x_i^c\circ x_i^c,x_i^c)|i=1,2,\cdots,k\}$. Obviously, $\equiv_N\subseteq\equiv_M$ for any $N\leq M$.

Let $\eta_k:\  Rig[X_k\cup X_k^c]\rightarrow S_c[X_k]$ be the natural epimorphism, for any $\alpha,\beta\in Rig[X_N\cup X_N^c]$, if $\alpha\equiv_N\beta$, since $\equiv_N\subseteq\equiv_M$, so $\alpha\equiv_M\beta$, this means that $\eta_M i_{NM}(\alpha)=\eta_M i_{NM}(\beta)$, so there exists an unique homomorphism $f_{NM}$, such that the following diagram commutative:
\begin{displaymath}
\xymatrix{ Rig[X_N\cup X_N^c]\ar[d]_{\eta_N} \ar[r]_{\ \ \ \ \ \eta_M i_{NM}} & S_{c}[X_M]\ar@{<--}[dl]^{f_{NM}}\\
            S_c[X_N] &}
\end{displaymath}

\begin{prop}\label{fnmmonic}
For any $N\leq M$, $f_{NM}$ is a monomorphism.
\end{prop}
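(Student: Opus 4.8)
The plan is to exploit the explicit normal forms of the I-C semirings provided by \thmref{Theorem 3.1}. Recall that a normal form element of $S_c[X_N]$ is either $1$ or a sum $\sum^{\circ}_{k\in D}y_k^{(N)}$ for some proper subset $D\subsetneq\{0,1,\dots,2^N-1\}$, and similarly for $S_c[X_M]$ with $2^M$ atoms. So the first step is to understand how $f_{NM}$ acts on these normal forms, which I would do by chasing the commutative diagram: since $\eta_N$ is surjective, every element of $S_c[X_N]$ is $\eta_N(\alpha)$ for some $\alpha\in Rig[X_N\cup X_N^c]$, and $f_{NM}(\eta_N(\alpha))=\eta_M(i_{NM}(\alpha))$. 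Thus it suffices to track what happens to the generators $x_i,x_i^c$ (for $i\le N$) and to the relations.

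The key combinatorial step is to identify, inside $S_c[X_M]$, the image of the "small" atom $y_k^{(N)}$ (a product of $N$ literals among $x_1,\dots,x_N$) as a sum of "large" atoms $y_\ell^{(M)}$. Concretely, a monomial $x_1^{j_1}\cdots x_N^{j_N}$ in $Rig[X_M\cup X_M^c]$ equals, modulo $\equiv_M$, the $\circ$-sum of all $y_\ell^{(M)}$ whose first $N$ binary digits are $(j_1,\dots,j_N)$ and whose remaining digits $j_{N+1},\dots,j_M$ range freely over $\{0,1\}$; this is exactly the iterated use of the complement relation $x_i\circ x_i^c=1$ together with distributivity, i.e.\ the analogue of relations $(r_4)$--$(r_5)$ applied in $S_c[X_M]$. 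Hence $f_{NM}$ sends $y_k^{(N)}$ to $\sum^{\circ}_{\ell\in E_k}y_\ell^{(M)}$ where the sets $E_k$, as $k$ ranges over $\{0,\dots,2^N-1\}$, form a partition of $\{0,\dots,2^M-1\}$ into $2^N$ blocks each of size $2^{M-N}$. Consequently $f_{NM}\big(\sum^{\circ}_{k\in D}y_k^{(N)}\big)=\sum^{\circ}_{\ell\in\bigcup_{k\in D}E_k}y_\ell^{(M)}$ and $f_{NM}(1)=1$.

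Given this description, injectivity is immediate: two distinct normal forms in $S_c[X_N]$ are either $1$ versus some $\sum^{\circ}_{k\in D}y_k^{(N)}$ with $D\subsetneq\{0,\dots,2^N-1\}$, or two sums with distinct index sets $D\ne D'$. In the first case the images are $1$ and $\sum^{\circ}_{\ell\in\bigcup_{k\in D}E_k}y_\ell^{(M)}$ with $\bigcup_{k\in D}E_k\subsetneq\{0,\dots,2^M-1\}$ (since $D$ misses some $k$, the image misses the whole block $E_k$), so by the normal-form uniqueness in $S_c[X_M]$ they are different. In the second case, $D\ne D'$ forces $\bigcup_{k\in D}E_k\ne\bigcup_{k\in D'}E_k$ because the $E_k$ are pairwise disjoint and nonempty, so again the images are distinct normal forms. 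Since $f_{NM}$ is a semiring homomorphism carrying distinct elements to distinct elements, it is a monomorphism.

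The main obstacle is the second paragraph: rigorously justifying that $i_{NM}$ followed by $\eta_M$ sends the atom $y_k^{(N)}$ precisely to $\sum^{\circ}_{\ell\in E_k}y_\ell^{(M)}$ with the $E_k$ forming a partition. This requires a careful induction on $M-N$ (or a direct argument) using the defining relations of $\equiv_M$, essentially reproving in this countable setting the computation that underlies relations $(r_4)$ and $(r_5)$ of \thmref{Theorem 3.1}; once that identity is in hand, everything else is bookkeeping with finite subsets of $\{0,\dots,2^M-1\}$. An alternative, possibly cleaner, route avoids the explicit partition: use only that $\eta_M\circ i_{NM}$ and $\eta_N$ are epimorphisms and that the normal-form set of $S_c[X_N]$ injects, via the evident "cylinder" map on index sets, into that of $S_c[X_M]$ — but either way the crux is pinning down the action of $f_{NM}$ on the $Y$-atoms.
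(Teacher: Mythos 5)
Your proposal is correct and follows essentially the same route as the paper: compute that $f_{NM}$ sends $y_k^{(N)}$ to the $\circ$-sum of all $y_\ell^{(M)}$ with $\ell\equiv k \pmod{2^N}$ (your blocks $E_k$ are exactly these congruence classes), then recover the index set $A$ from the image using uniqueness of normal forms — the paper does this by looking at the indices below $2^N$, you by disjointness of the blocks, which is the same observation. Your explicit handling of the normal form $1$ and your remark that the block description of $f_{NM}(y_k^{(N)})$ deserves a short induction are minor refinements of details the paper states as ``straightforward.''
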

\begin{proof}
For any $\alpha,\beta\in S_c[X_N]$, suppose $\alpha=\mathop{\sum^{\circ}}\limits_{i\in A} y_{i}^{(N)},\beta=\mathop{\sum^{\circ}}\limits_{j\in B} y_{j}^{(N)}$ be the normal forms of $\alpha,\beta$ in $S_c[X_N]$. Using  the G-S basis for $S_c[X_M]$, it is straightforward to obtain the normal form of  $f_{NM}(y_k^{(N)})=\mathop{\sum^{\circ}}\limits_{l \mod2^N=k}^{l\leq2^M-1} y_{l}^{(M)}$ in $S_c[X_M]$, Consequently,  the normal form of  $f_{NM}(\alpha)=\mathop{\sum^{\circ}}\limits_{k\in A}\mathop{\sum^{\circ}}\limits_{l\mod2^N=k}^{l\leq2^M-1} y_{l}^{(M)}$. If $f_{NM}(\alpha)=f_{NM}(\beta)$, examining the index values less than  $2^N$ in the  normal forms of $f_{NM}(\alpha)$ and $f_{NM}(\beta)$ in $S_c[X_M]$, allows us to conclude that $A=B$. Therefore $\alpha=\beta$, $f_{NM}$ is a monomorphism.

\end{proof}

Through routine verification, we establish the following proposition:
\begin{prop}
For any $N \leq M \leq K$, it holds that $f_{NK} = f_{MK} \cdot f_{NM}$, and $f_{NN}$ is an identity homomorphism.
\end{prop}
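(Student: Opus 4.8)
The plan is to derive both assertions purely from the defining property of the maps $f_{NM}$, namely the commuting triangle $f_{NM}\,\eta_N=\eta_M\,i_{NM}$, together with the observation that $\eta_N\colon Rig[X_N\cup X_N^c]\to S_c[X_N]$ is surjective, so that any two homomorphisms out of $S_c[X_N]$ agreeing after precomposition with $\eta_N$ must coincide (equivalently, one may quote the uniqueness clause already used to construct $f_{NM}$). Before starting I would record the two elementary facts about the free commutative semirings on which everything rests: for $N\le M\le K$ the natural embeddings satisfy $i_{MK}\,i_{NM}=i_{NK}$, and $i_{NN}=\mathrm{id}_{Rig[X_N\cup X_N^c]}$. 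Both are immediate, since $i_{NM}$ is the homomorphism induced by the set inclusion $X_N\cup X_N^c\hookrightarrow X_M\cup X_M^c$ extended by freeness, and a composite of inclusions is again the inclusion.

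For the composition law I would precompose $f_{MK}\,f_{NM}$ with $\eta_N$ and use associativity of composition, the defining triangle for $f_{NM}$, the defining triangle for $f_{MK}$, and then $i_{MK}\,i_{NM}=i_{NK}$:
\[
(f_{MK}\,f_{NM})\,\eta_N=f_{MK}\,(\eta_M\,i_{NM})=(\eta_K\,i_{MK})\,i_{NM}=\eta_K\,i_{NK}.
\]
The defining triangle for $f_{NK}$ gives $f_{NK}\,\eta_N=\eta_K\,i_{NK}$ as well, so $(f_{MK}\,f_{NM})\,\eta_N=f_{NK}\,\eta_N$; cancelling the epimorphism $\eta_N$ on the right yields $f_{MK}\,f_{NM}=f_{NK}$. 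For the second assertion, $i_{NN}=\mathrm{id}$ gives $f_{NN}\,\eta_N=\eta_N\,i_{NN}=\eta_N=\mathrm{id}_{S_c[X_N]}\,\eta_N$, and cancelling $\eta_N$ again gives $f_{NN}=\mathrm{id}_{S_c[X_N]}$.

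I do not expect a genuine obstacle; the only step worth a sentence of justification is the right-cancellability of $\eta_N$, which holds because $\eta_N$ is the canonical quotient map onto $S_c[X_N]=Rig[X_N\cup X_N^c]/\equiv_N$ and hence surjective. If one prefers a computation to the diagram chase, the alternative is to start from the explicit formula $f_{NM}(y_k^{(N)})=\sum^{\circ}_{\,l\le 2^M-1,\ l\equiv k\ (\mathrm{mod}\ 2^N)} y_l^{(M)}$ established in the proof of Proposition~\ref{fnmmonic}, apply $f_{MK}$ term by term using the same formula for $M\le K$, and check that the resulting double sum collapses to $\sum^{\circ}_{\,p\le 2^K-1,\ p\equiv k\ (\mathrm{mod}\ 2^N)} y_p^{(K)}=f_{NK}(y_k^{(N)})$; this reduces to the refinement of congruences (for $p\le 2^K-1$ the unique $l\le 2^M-1$ with $p\equiv l\ (\mathrm{mod}\ 2^M)$ satisfies $l\equiv k\ (\mathrm{mod}\ 2^N)$ iff $p\equiv k\ (\mathrm{mod}\ 2^N)$, since $2^N\mid 2^M$), i.e.\ routine modular bookkeeping, after which additivity in the ``$\circ$'' operation settles the general case. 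The identity $f_{NN}=\mathrm{id}$ is likewise transparent from the formula, as $l\equiv k\ (\mathrm{mod}\ 2^N)$ with $l\le 2^N-1$ forces $l=k$.
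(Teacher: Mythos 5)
Your argument is correct. The paper offers no proof here (it dismisses the statement as ``routine verification''), and your diagram chase is precisely the verification intended: the identities $i_{MK}\,i_{NM}=i_{NK}$ and $i_{NN}=\mathrm{id}$ for the free-semiring embeddings, combined with the defining factorization $f_{NM}\,\eta_N=\eta_M\,i_{NM}$ and the right-cancellability of the surjection $\eta_N$ (equivalently, the uniqueness clause in the construction of $f_{NK}$), give both claims immediately. Your alternative computation with the normal forms $f_{NM}(y_k^{(N)})=\mathop{\sum^{\circ}}_{l\le 2^M-1,\ l\equiv k\ (\mathrm{mod}\ 2^N)}y_l^{(M)}$ is also sound and matches the formula used in the proof of Proposition~\ref{fnmmonic}, so either route is acceptable.
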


The triple $\{S_c[X_N], f_{NM}, \mathbb{N}^+\}$ forms a direct system. We can construct its direct limit $S_c[X]$ as follows:

Let $S=\bigsqcup\limits_{N\in\mathbb{N}^+}({S_c[X_N]\times\{N\}})$, and define an equivalent relation in $S$ as $(a,i)\sim(b,j)$ iff $ \exists k\geq i, k\geq j$, such that $f_{ki}(a)=f_{kj}(b)$. Denote $S_c[X]=S/\sim$, and define two operations in it as $\overline{(a,i)}\circ\overline{(b,j)}=\overline{(f_{ki}(a)\circ f_{kj}(b),k)},\overline{(a,i)}\cdot\overline{(b,j)}=\overline{(f_{ki}(a)\cdot f_{kj}(b),k)}$ for some $k\geq i,j$. It is not difficult to verify that $S_c[X]$ is a semiring and $S_c[X]=\lim\limits_{\rightarrow}S_c[X_N]$.

\begin{prop}
$S_c[X] $ is the free I-C semiring generated by $X$.
\end{prop}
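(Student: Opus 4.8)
The plan is to verify the universal property of the free I-C semiring: namely, that $S_c[X]$ is an I-C semiring containing $X$ (via the canonical maps), and that any map from $X$ into an arbitrary I-C semiring $T$ extends uniquely to a semiring homomorphism $S_c[X]\to T$ that also respects complements. Since $S_c[X]$ is the direct limit of the $S_c[X_N]$, the strategy is to push everything down to the finite stages, where Theorem~\ref{Theorem 3.1} (or rather its I-C reformulation in \cite{NLW}) already tells us each $S_c[X_N]$ is the free I-C semiring on $X_N$.

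First I would check that $S_c[X]$ is itself an I-C semiring. The semiring axioms have already been asserted for $S_c[X]$; what remains is to produce complements. Given a class $\overline{(a,i)}\in S_c[X]$, the element $a\in S_c[X_i]$ has a complement $a^c\in S_c[X_i]$, and one sets $\overline{(a,i)}^c:=\overline{(a^c,i)}$. Because each $f_{NM}$ is a semiring homomorphism between I-C semirings, it automatically sends complements to complements (complements are characterized equationally by conditions (1)--(4) of Definition~\ref{Definition 3.1}, and in these semirings the complement is unique), so this is well defined on $\sim$-classes; the four identities of Definition~\ref{Definition 3.1} for $\overline{(a,i)}$ follow directly from those for $a$ in $S_c[X_i]$ after transporting along $f_{ki}$. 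Next I would record the canonical maps $\lambda_N:S_c[X_N]\to S_c[X]$, $a\mapsto\overline{(a,N)}$; by Proposition~3.2 and the construction of the limit these satisfy $\lambda_M f_{NM}=\lambda_N$, each $\lambda_N$ is a monomorphism (using Proposition~\ref{fnmmonic} and injectivity of the $f$'s into the limit), and $S_c[X]=\bigcup_N\lambda_N(S_c[X_N])$. The generating set is $X=\{\,\overline{(x_n,N)} : n\le N\,\}$, which is consistent across stages by the compatibility of the $f_{NM}$ with the embeddings $i_{NM}$.

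Then comes the universal property. Let $T$ be any I-C semiring and $\varphi:X\to T$ any function. For each $N$, the restriction $\varphi|_{X_N}$ extends uniquely to an I-C homomorphism $\psi_N:S_c[X_N]\to T$ by freeness of $S_c[X_N]$. Uniqueness forces the compatibility $\psi_M f_{NM}=\psi_N$: indeed $\psi_M f_{NM}$ is an I-C homomorphism $S_c[X_N]\to T$ agreeing with $\varphi$ on $X_N$ (since $f_{NM}$ fixes the generators, being induced by the embedding $i_{NM}$), so it must equal $\psi_N$. By the universal property of the direct limit in the category of semirings, the family $(\psi_N)$ induces a unique semiring homomorphism $\psi:S_c[X]\to T$ with $\psi\lambda_N=\psi_N$; explicitly $\psi(\overline{(a,i)})=\psi_i(a)$. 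This $\psi$ extends $\varphi$ on $X$, and it respects complements because on each $\lambda_N(S_c[X_N])$ — whose union is all of $S_c[X]$ — it does, since $\psi_N$ does. For uniqueness, any I-C homomorphism $S_c[X]\to T$ extending $\varphi$ restricts on each $\lambda_N(S_c[X_N])\cong S_c[X_N]$ to an I-C homomorphism extending $\varphi|_{X_N}$, hence equals $\psi_N$ there by freeness of $S_c[X_N]$; as these subsemirings exhaust $S_c[X]$, the map is $\psi$.

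The main obstacle I anticipate is not any single hard computation but the bookkeeping around well-definedness on $\sim$-classes and the verification that the direct-limit construction genuinely lives in, and is a colimit within, the category of I-C semirings rather than just plain semirings — in particular that the complement operation is continuous with respect to the limit and that ``I-C homomorphism'' is the right notion of morphism (equivalently, that every semiring homomorphism between I-C semirings automatically preserves complements, which rests on uniqueness of complements in an I-C semiring). Once that categorical setup is cleanly in place, each individual step reduces to freeness of the finite $S_c[X_N]$ from \cite{NLW} plus the standard universal property of $\varinjlim$, so I would spend most of the write-up making the ``I-C category'' framework precise and the rest is routine diagram-chasing.
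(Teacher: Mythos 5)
Your proposal is correct, but it takes a different route from the paper on the key step. The paper does not invoke the universal property of the direct limit at all: after noting that $S_c[X]$ inherits the I-C property from the finite stages and checking via the G-S basis that the canonical map $\eta: X\cup X^c\to S_c[X]$ is injective, it simply writes down the extension $\bar f$ explicitly on a representative, sending $\overline{(\alpha,i)}$ with $\alpha=U_1\circ\cdots\circ U_n$ (each $U_k$ a product of generators and complements) to $f(U_1)\circ\cdots\circ f(U_n)$, and asserts that this is a well-defined homomorphism making the diagram commute, with uniqueness "obvious"; its universal property is also phrased for maps $f:X\cup X^c\to S$ subject to $f(x_i)\cdot f(x_i^c)=\theta$ and $f(x_i)\circ f(x_i^c)=1$. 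You instead factor the argument through the freeness of each $S_c[X_N]$ (available from \cite{NLW} and the end of Section 2) together with the colimit universal property: extend $\varphi|_{X_N}$ to $\psi_N$, use uniqueness to get $\psi_M f_{NM}=\psi_N$, and induce $\psi$ on the limit. Your version is more modular and makes explicit the points the paper glosses over --- well-definedness on $\sim$-classes, the fact that complements in a commutative I-C semiring are unique so that every semiring homomorphism automatically preserves them (hence no separate "I-C morphism" bookkeeping is really needed), and the reduction of uniqueness to the exhaustion $S_c[X]=\bigcup_N\lambda_N(S_c[X_N])$; what it costs is dependence on the finite-stage freeness statement, whereas the paper's explicit formula re-derives that content by hand in one stroke. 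The two formulations of the universal property (maps from $X$ versus maps from $X\cup X^c$ with the two compatibility conditions) are equivalent precisely because of the uniqueness of complements you point out.
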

\begin{proof}

Since each $S_c[X_N]$ is an I-C semiring, it is easy to see that $S_c[X]$ is an I-C semiring.

Denote $\eta: X \cup X^c \longrightarrow S_c[X]$ via $x_i \mapsto \overline{(x_i, i)}$ and $x_i^c \mapsto \overline{(x_i^c, i)}$. If $\eta(x_i) = \eta(x_j)$, then $\overline{(x_i, i)} = \overline{(x_j, j)}$, so there exists $k \geq i, j$ such that $f_{ki}(x_i) = f_{kj}(x_j)$ in $S_c[X_k]$. By using the G-S basis for $S_c[X_k]$, we can conclude that $x_i = x_j$, and $x_i \neq x_j^c$ for any $i, j$ similarly. Thus, $\eta$ is a monomorphism.

For any I-C semiring $S$ and a map $f: X \cup X^c \longrightarrow S$ satisfying $f(x_i) \cdot f(x_i^c) = \theta$ and $f(x_i) \circ f(x_i^c) = 1$, define $\bar{f}: S_c[X] \longrightarrow S$ via $\overline{(\alpha, i)} \mapsto f(U_1) \circ f(U_2) \cdots \circ f(U_n)$, where $\alpha = U_1 \circ U_2 \cdots \circ U_n$, $U_k = x_1^{k_1} \cdot x_2^{k_2} \cdots x_i^{k_i}$, and $f(U_k) = f(x_1^{k_1}) \cdot f(x_2^{k_2}) \cdots f(x_i^{k_i})$. Here, $x_l^h$ denotes $1, x_l, x_l^c$ when $h = 0, 1, c$.

It is easy to verify that $\bar{f}$ is a homomorphism, and the following diagram is commutative:
\begin{displaymath}
\xymatrix{ X\cup X^c\ar[d]_{f} \ar[r]_{\eta} & S_{c}[X]\ar@{-->}[dl]^{\bar{f}}\\
            S &}
\end{displaymath}
$\bar{f}$ is unique obviously, so $S_c[X]$ is the free I-C semiring generated by $X$.

\end{proof}
\section{\bf propositional logical semirings}
In this section, we use I-C semirings to represent  propositional logic system, and define concepts such as implication and deduction.

We use $Hom(S, T)$ to denote the set of homomorphisms from semiring $S$ to semiring $T$, and $End(S)$ to denote the set of endomorphisms of semiring $S$.

\begin{lem}\label{rehom}
Let $S$ be a semiring and $\tau\in End(S)$, $\equiv_1=<(a_1,b_1),\cdots,(a_n,b_n)>$ and $\equiv_2=<(\tau(a_1),\tau(b_1)),\cdots,(\tau(a_n),\tau(b_n))>$ be two congruence relations in $S$, then for any $x\equiv_1 y$,   $\tau(x)\equiv_2 \tau(y)$
\end{lem}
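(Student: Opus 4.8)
The plan is to exploit the fact that the congruence $\equiv_1$ is, by definition, the smallest congruence relation on $S$ containing the pairs $(a_1,b_1),\dots,(a_n,b_n)$, so it admits an inductive (or ``generated-by'') description: every pair $x\equiv_1 y$ is obtained from the generating pairs by finitely many applications of reflexivity, symmetry, transitivity, and compatibility with the two operations $\circ$ and $\cdot$ (including substitution of subterms). I would therefore argue by induction on the length of a derivation witnessing $x\equiv_1 y$, and show at each step that applying the endomorphism $\tau$ produces a valid derivation witnessing $\tau(x)\equiv_2\tau(y)$.

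The base case is exactly where the specific choice of $\equiv_2$ is used: if $x\equiv_1 y$ because $(x,y)=(a_i,b_i)$ is one of the generators, then $(\tau(x),\tau(y))=(\tau(a_i),\tau(b_i))$ is by construction one of the generators of $\equiv_2$, hence $\tau(x)\equiv_2\tau(y)$. For the inductive steps: reflexivity $x\equiv_1 x$ maps to $\tau(x)\equiv_2\tau(x)$ trivially; symmetry and transitivity are preserved because $\equiv_2$ is itself an equivalence relation; and for the compatibility steps one uses that $\tau$ is an endomorphism, so $\tau(x_1\circ x_2)=\tau(x_1)\circ\tau(x_2)$ and $\tau(x_1\cdot x_2)=\tau(x_1)\cdot\tau(x_2)$, which lets us push $\tau$ through the operation while the congruence property of $\equiv_2$ handles the replacement of a subterm by a $\equiv_2$-related one. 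An equivalent and slightly slicker packaging is to define the relation $R=\{(u,v)\in S\times S : \tau(u)\equiv_2\tau(v)\}$, check directly that $R$ is a congruence relation on $S$ (reflexive/symmetric/transitive since $\equiv_2$ is, and compatible with $\circ,\cdot$ since $\tau$ is a homomorphism and $\equiv_2$ is a congruence), observe that $R$ contains every generator $(a_i,b_i)$ by the base-case computation above, and conclude $\equiv_1\subseteq R$ by minimality of $\equiv_1$; this is precisely the statement $x\equiv_1 y\implies\tau(x)\equiv_2\tau(y)$.

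I expect no genuine obstacle here: the only point requiring a little care is making the ``generated congruence'' description precise enough that the induction (or the verification that $R$ is a congruence) is clean — in particular checking that $R$ is closed under the operations uses the homomorphism property of $\tau$ in an essential way, and this is the one spot where the hypothesis $\tau\in End(S)$ (rather than an arbitrary map) is needed. I would present the $R$-based argument, as it avoids an explicit induction on derivation length.
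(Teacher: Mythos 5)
Your proposal is correct and takes essentially the same route as the paper: your relation $R=\{(u,v)\in S\times S:\tau(u)\equiv_2\tau(v)\}$ is exactly the kernel congruence of the composite $g=\eta\tau$ (with $\eta:S\to S/\equiv_2$ the natural epimorphism) that the paper works with, and both arguments finish by noting the generators $(a_i,b_i)$ lie in this congruence and invoking minimality of $\equiv_1$. The only difference is presentational: the paper phrases the containment via the universal property of the quotient (producing the induced map $f:S/\equiv_1\to S/\equiv_2$), whereas you verify the congruence axioms for $R$ directly.
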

\begin{proof}
Let $g=\eta\tau$, where $\eta$ the natural epimorphism from $S$ to $S/\equiv_2$, then a congruence relation $\equiv_g$ in $S$ can be induced from $g$ as $x\equiv_g y$ iff $g(x)=g(y)$. For any $(a_i,b_i)$, since $\tau(a_i)\equiv_2 \tau(b_i)$, $g(a_i)=\eta(\tau(a_i))=\eta(\tau(b_i))=g(b_i)$, so $a_i\equiv_g b_i$,
$(a_i,b_i)\in \equiv_g$, $\equiv_1\subseteq\equiv_g$, then there exists an $f\in Hom(S/\equiv_1,S/\equiv_2)$, such that the following diagram commutative:
\begin{displaymath}
\xymatrix{ S\ar[d]_{\eta}\ar[r]_{g\ \ \ \ } & S/\equiv_2\ar@{<--}[dl]^{f}\\
            S/\equiv_1 &}
\end{displaymath}
Obviously, $f(\bar{x})=\overline{\tau(x)}$, so for any $x\equiv_1 y$, $\tau(x)\equiv_2 \tau(y)$
\end{proof}

\begin{lem}\label{homequiv}
Let $S$ be a semiring and $\tau\in End(S)$, $\equiv=<(a_1,b_1),\cdots,(a_n,b_n)>$  be a congruence relations in $S$, if $\tau(a_i)\equiv \tau(b_i)$ for $1\leq i\leq n$,  then $\tau(x)\equiv \tau(y)$ for any $x\equiv y$.
\end{lem}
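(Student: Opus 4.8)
The plan is to deduce \lemref{homequiv} directly from \lemref{rehom}, exploiting the fact that the hypothesis "$\tau(a_i)\equiv\tau(b_i)$ for $1\le i\le n$" says exactly that the congruence $\equiv_2=\langle(\tau(a_1),\tau(b_1)),\dots,(\tau(a_n),\tau(b_n))\rangle$ appearing in \lemref{rehom} is contained in $\equiv$ itself. So first I would observe that $\equiv_2\subseteq\equiv$: indeed $\equiv_2$ is by definition the smallest congruence relation containing the pairs $(\tau(a_i),\tau(b_i))$, and $\equiv$ is a congruence containing all of these pairs by hypothesis, hence $\equiv_2\subseteq\equiv$.

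Next, I would apply \lemref{rehom} with the very same $S$, $\tau$, $\equiv_1:=\equiv=\langle(a_1,b_1),\dots,(a_n,b_n)\rangle$, and $\equiv_2$ as above. \lemref{rehom} yields: for any $x\equiv y$ (i.e. $x\equiv_1 y$), we have $\tau(x)\equiv_2\tau(y)$. Combining this with the inclusion $\equiv_2\subseteq\equiv$ from the previous step gives $\tau(x)\equiv\tau(y)$, which is exactly the desired conclusion. This is essentially the whole argument.

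I do not anticipate a genuine obstacle here; the statement is a clean corollary of \lemref{rehom}. The one point requiring a line of care is the justification that $\equiv_2\subseteq\equiv$, namely that the notation $\langle(c_1,d_1),\dots,(c_n,d_n)\rangle$ denotes the congruence \emph{generated by} those pairs, so that any congruence containing the generating pairs contains the whole thing; this is implicit in the paper's use of $\langle\cdot\rangle$ and in the earlier arguments (e.g. the "$\equiv_N\subseteq\equiv_M$" step and the proof of \lemref{rehom} where $\equiv_1\subseteq\equiv_g$ is derived the same way). If one preferred a self-contained argument not citing \lemref{rehom}, one could instead repeat its proof verbatim with $\equiv_2$ replaced by $\equiv$: form $g=\eta\tau$ with $\eta\colon S\to S/\!\equiv$ the natural epimorphism, check $\equiv\,\subseteq\,\equiv_g$ using the hypothesis $\tau(a_i)\equiv\tau(b_i)$, factor to get $f\in Hom(S/\!\equiv,S/\!\equiv)$ with $f(\bar x)=\overline{\tau(x)}$, and conclude $\tau(x)\equiv\tau(y)$ whenever $x\equiv y$. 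Either route is short; I would present the first, as it makes the logical dependence on \lemref{rehom} transparent.
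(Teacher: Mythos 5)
Your argument is exactly the paper's: the paper also notes that the congruence generated by the pairs $(\tau(a_i),\tau(b_i))$ is contained in $\equiv$ (since $\tau(a_i)\equiv\tau(b_i)$), applies Lemma~\ref{rehom} to get $\tau(x)$ and $\tau(y)$ related by that smaller congruence whenever $x\equiv y$, and concludes by the inclusion. Your proposal is correct and matches the paper's proof.
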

\begin{proof}
Since $\equiv_1=<(\tau(a_1),\tau(b_1)),\cdots,(\tau(a_n),\tau(b_n))>\subseteq\equiv$ by $\tau(a_i)\equiv \tau(b_i)$ for $1\leq i\leq n$, and for any $x\equiv y$, $\tau(x)\equiv_1 \tau(y)$ by Lemma \ref{rehom}, so $\tau(x)\equiv \tau(y)$.
\end{proof}
\begin{lem}\label{homexist}
Let $\aleph_N=\{0,1,2,\cdots,2^N-1\}$, $\pi$ be a permutation on $\aleph_N$, then there is a $\tau\in End(S_c[X])$, such that $\tau(\overline{(y_i^{(N)},N)})=\overline{(y_{\pi(i)}^{(N)},N)}$ for $i \in \aleph_N$.
\end{lem}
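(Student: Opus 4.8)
The plan is to build the desired endomorphism $\tau$ on $S_c[X]$ by first constructing compatible endomorphisms on each finite stage $S_c[X_M]$ with $M \geq N$, and then passing to the direct limit. Since $S_c[X]$ is the free I-C semiring generated by $X$ (by the previous proposition), it suffices to specify where $\tau$ sends the generators $x_i$ and $x_i^c$ in a way that respects the defining relations of an I-C semiring; the universal property then produces $\tau$. The key point is that the permutation $\pi$ of $\aleph_N = \{0,1,\dots,2^N-1\}$ permutes the "atoms" $y_i^{(N)}$ of $S_c[X_N]$, and because $\{y_0^{(N)},\dots,y_{2^N-1}^{(N)}\}$ is a partition-of-unity type family satisfying $y_i^{(N)}\cdot y_j^{(N)} = \theta$ for $i\neq j$, $y_i^{(N)}\cdot y_i^{(N)} = y_i^{(N)}$, $y_i^{(N)}\circ y_i^{(N)} = y_i^{(N)}$ and $\mathop{\sum^{\circ}}\limits_{i\in\aleph_N} y_i^{(N)} = 1$ (these are exactly relations $(r_1),(r_2),(r_3),(r_6)$ of \thmref{Theorem 3.1} applied to $X_N$), any permutation of the indices preserves all of these relations. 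Hence the assignment $y_i^{(N)} \mapsto y_{\pi(i)}^{(N)}$ extends to a well-defined endomorphism $\tau_N$ of $S_c[X_N]$ — concretely, on a normal form $\mathop{\sum^{\circ}}\limits_{k\in D} y_k^{(N)}$ one sets $\tau_N\big(\mathop{\sum^{\circ}}\limits_{k\in D} y_k^{(N)}\big) = \mathop{\sum^{\circ}}\limits_{k\in D} y_{\pi(k)}^{(N)}$ and $\tau_N(1)=1$.

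Next I would promote $\tau_N$ to each $S_c[X_M]$ with $M \geq N$. Recall from the proof of \propref{fnmmonic} that $f_{NM}(y_k^{(N)}) = \mathop{\sum^{\circ}}\limits_{l\bmod 2^N = k}^{l\leq 2^M-1} y_l^{(M)}$, i.e. each stage-$N$ atom splits into the block of stage-$M$ atoms whose index reduces to it mod $2^N$. Define a permutation $\pi_M$ of $\aleph_M$ that acts on the residue-mod-$2^N$ of an index via $\pi$ and fixes the high-order bits: writing $l = q\cdot 2^N + r$ with $0\le r < 2^N$, set $\pi_M(l) = q\cdot 2^N + \pi(r)$. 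This $\pi_M$ restricts, in the evident sense, to $\pi$ on the blocks, so the induced endomorphism $\tau_M$ of $S_c[X_M]$ satisfies $\tau_M \circ f_{NM} = f_{NM}\circ \tau_N$; more generally, for $N\le M\le K$ one checks $\tau_K\circ f_{MK} = f_{MK}\circ\tau_M$ because $\pi_K$ is defined from $\pi$ by the same block recipe. Thus $\{\tau_M\}_{M\ge N}$ is a morphism of the direct system $\{S_c[X_M], f_{MK}\}$ into itself, and by the universal property of the direct limit it induces a unique endomorphism $\tau \in End(S_c[X])$ with $\tau\big(\overline{(a,M)}\big) = \overline{(\tau_M(a),M)}$ for all $M\ge N$. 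In particular $\tau\big(\overline{(y_i^{(N)},N)}\big) = \overline{(\tau_N(y_i^{(N)}),N)} = \overline{(y_{\pi(i)}^{(N)},N)}$ for $i\in\aleph_N$, which is exactly the claim.

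The main obstacle, and the place where I would spend the most care, is verifying that the $\tau_M$ are genuinely well-defined semiring endomorphisms and that they commute with the transition maps $f_{MK}$. The well-definedness is not automatic from "permute the atoms": one must confirm that the permuted expression is again a normal form (it is, since $\pi$ is a bijection on $\aleph_N$, so distinct atoms go to distinct atoms and no collapsing occurs), and that $\tau_M$ respects both $\circ$ and $\cdot$ — this reduces to the G-S relations $(r_1)$–$(r_3)$ and $(r_6)$ being stable under index permutation, which they visibly are because they are symmetric in the atoms. The compatibility $\tau_K f_{MK} = f_{MK}\tau_M$ is the slightly delicate bookkeeping step: it amounts to checking that "split then permute-by-$\pi_K$" equals "permute-by-$\pi_M$ then split", which holds precisely because $\pi_M$ and $\pi_K$ were both built from the single permutation $\pi$ acting on the low-order $N$ bits while leaving the higher bits untouched. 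Alternatively, one can bypass the stage-by-stage construction entirely and invoke freeness of $S_c[X]$ directly: define $f:X\cup X^c\to S_c[X]$ by sending $x_i \mapsto \tau_i\big(\overline{(x_i,i)}\big)$-type expressions obtained by rewriting $x_i$ and $x_i^c$ (for $i\le N$) as $\circ$-sums of the $\overline{(y_k^{(N)},N)}$ via relations $(r_4),(r_5)$, applying $\pi$ to the indices, and fixing $x_i,x_i^c$ for $i>N$; one then checks $f(x_i)\cdot f(x_i^c)=\theta$ and $f(x_i)\circ f(x_i^c)=1$ (again a consequence of $\pi$ being a bijection, since $\{A_i^{(N)},A_i^{c(N)}\}$ partition $\aleph_N$), and the universal property hands back $\tau$. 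Either route works; I would present the direct-limit version since the direct system machinery is already set up in the preceding section.
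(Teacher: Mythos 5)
Your proposal is correct and follows essentially the same route as the paper: the paper also defines, for each $M\geq N$, the stage-$M$ endomorphism by the block permutation $i\mapsto \pi(i\bmod 2^N)+i-(i\bmod 2^N)$ on normal forms, checks $f_{MK}\tau_M=\tau_K f_{MK}$, and then defines $\tau$ on the direct limit by $\overline{(\alpha,i)}\mapsto\overline{(\tau_K f_{iK}(\alpha),K)}$, verifying well-definedness and the homomorphism property by hand rather than citing the universal property. Your alternative freeness-based construction is a reasonable extra remark but is not needed.
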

\begin{proof}
For any $M\geq N$, we  define   $\tau_M:\ S_c[X_M]\rightarrow S_c[X_M]$ via

$\tau_M(\mathop{\sum^{\circ}}\limits_{i\in A} y_{i}^{(M)})=\mathop{\sum^{\circ}}\limits_{i\in A} y_{\pi(i\mod2^N)+i-i\mod2^N}^{(M)}$

It can be routinely verified that $\tau_M\in End(S_c[X_M])$ and $f_{MK}\tau_M=\tau_Kf_{MK}$ for any $N\leq M\leq K$.

 Then for any $\overline{(\alpha,i)}\in S_c[X]$,
we define $\tau(\overline{(\alpha,i)})=\overline{(\tau_Kf_{iK}(\alpha),K)}$ for some $K\geq i,\ N$.

If $K_1,K_2\geq i,N$, there exists a $K\geq K_1,K_2$, such that $f_{K_1K}\tau_{K_1}f_{iK_1}(\alpha)=\tau_{K}f_{K_1K}f_{iK_1}(\alpha)=\tau_{K}f_{iK}(\alpha)=\tau_{K}f_{K_2K}f_{iK_2}(\alpha)=f_{K_2K}\tau_{K_2}f_{iK_2}(\alpha)$, $\overline{(\tau_{K_1}f_{iK_1}(\alpha),K_1)}=\overline{(\tau_{K_2}f_{iK_2}(\alpha),K_2)}$, otherwise if
$\overline{(\alpha,i)}=\overline{(\beta,j)}\in S_c[X]$, there is a $K\geq i,j,N$, such that $f_{iK}(\alpha)=f_{jK}(\beta)$, $\tau(\overline{(\alpha,i)})=\overline{(\tau_K(f_{iK}(\alpha)),K)}=\overline{(\tau_K(f_{jK}(\beta)),K)}=\tau(\overline{(\beta,j)})$. So $\tau$ is well defined.

For $\diamond\in \{\cdot,\circ\}$, $\overline{(\alpha,i)}\diamond\overline{(\beta,j)}=\overline{(f_{iK}(\alpha)\diamond f_{jK}(\beta),K)}$ for some $K\geq i,j$, without loss of generality, let $K\geq N$, then  $\tau(\overline{(\alpha,i)}\diamond\overline{(\beta,j)})=\tau(\overline{(f_{iK}(\alpha)\diamond f_{jK}(\beta),K)})=\overline{(\tau_K(f_{iK}(\alpha)\diamond f_{jK}(\beta)),K)}=\overline{(\tau_Kf_{iK}(\alpha)\diamond \tau_Kf_{jK}(\beta),K)}=\overline{(\tau_Kf_{iK}(\alpha),K)}\diamond\overline{(\tau_Kf_{jK}(\beta),K)}=\tau(\overline{(\alpha,i)})\diamond\tau(\overline{(\beta,j)})$

Therefore  $\tau\in End(S_c[X])$, and $\tau(\overline{(y_i^{(N)},N)})=\overline{(\tau_N(y_i^{(N)}),N)}=\overline{(y_{\pi(i)}^{(N)},N)}$ for $i \in \aleph_N$.
\end{proof}

\begin{defn}\label{plsemiringdef}

Let $X=\{x_1,x_2,\cdots,x_n,\cdots\}$ represent a set of atoms in propositional logic, and $S_c[X]$ denote the free I-C semiring generated by $X$. Here, $x_i^c$ signifies the negation of $x_i$, and the operations $x_i \cdot x_j$ and $x_i \circ x_j$ correspond to logical AND ($x_i \wedge x_j$) and logical OR ($x_i \vee x_j$), respectively. In this context, we refer to $S_c[X]$ as a propositional logic semiring.
\end{defn}

\begin{defn}\label{implydef}
Let $X=\{x_1,x_2,\cdots,x_n,\cdots\}$ and $S_c[X]$ be the free I-C semiring generated by $X$. For any $\alpha, \beta \in S_c[X]$, the expression $\alpha^c \circ \beta$ is referred to as the implication that $\alpha$ implies $\beta$, and it is denoted as $\alpha \rightarrow \beta$.
\end{defn}

\begin{defn}\label{deducedef}
Let $X=\{x_1,x_2,\cdots,x_n,\cdots\}$ and $S_c[X]$ be the free I-C semiring generated by $X$. Suppose $\Gamma\subseteq S_c[X]$, $\beta\in S_c[X]$ and $\equiv_{\Gamma}=<\{(\alpha,1)|\alpha\in\Gamma\}>$ the congruence relation in $S_c[X]$ generated by $\alpha=1$ for all $\alpha\in \Gamma$. If $\beta\equiv_{\Gamma}1$, then we call $\beta$   deduced by $\Gamma$, denote as $\Gamma\vdash \beta$. Especially, if $\Gamma=\{\alpha\}$,  we denote it as $\alpha\vdash \beta$.

\end{defn}

\begin{prop}\label{deduced}
Let $X=\{x_1,x_2,\cdots,x_n,\cdots\}$,  $S_c[X]$ be the free I-C semiring generated by $X$, $\Gamma\subseteq S_c[X]$ and $\beta\in S_c[X]$. If $\Gamma\vdash \beta$, then there  exist $\alpha_1, \cdots, \alpha_n\in \Gamma$ such that $\alpha_1\cdot\alpha_2 \cdots\cdot\alpha_n\vdash\beta$.
\end{prop}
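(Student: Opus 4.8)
The plan is to unwind the definition of $\Gamma \vdash \beta$ and exploit the finitary nature of congruence relations. By Definition~\ref{deducedef}, $\Gamma \vdash \beta$ means $\beta \equiv_{\Gamma} 1$, where $\equiv_{\Gamma}$ is the congruence on $S_c[X]$ generated by the set of pairs $\{(\alpha,1) \mid \alpha \in \Gamma\}$. The crucial observation is that a congruence relation is an \emph{algebraic} closure: the pair $(\beta,1)$ lies in $\equiv_{\Gamma}$ if and only if it lies in the congruence generated by some \emph{finite} subcollection of the generating pairs. This is because $\equiv_{\Gamma}$ can be built up as the union of an increasing chain of relations obtained by finitely many applications of reflexivity, symmetry, transitivity, and compatibility with $\circ$ and $\cdot$; any particular element of the union is witnessed by a finite derivation tree, which involves only finitely many of the generators $(\alpha_1,1),\dots,(\alpha_n,1)$ with $\alpha_1,\dots,\alpha_n \in \Gamma$. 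Hence $\beta \equiv_{\Gamma'} 1$ where $\Gamma' = \{\alpha_1,\dots,\alpha_n\}$.

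Next I would reduce the claim $\{\alpha_1,\dots,\alpha_n\} \vdash \beta$, i.e. $\beta \equiv_{\Gamma'} 1$, to the single-generator statement $\alpha_1 \cdot \alpha_2 \cdots \alpha_n \vdash \beta$, i.e. $\beta \equiv_{\{\alpha_1\cdots\alpha_n\}} 1$. The key algebraic fact here is that in an I-C semiring each element $s$ is idempotent under $\circ$ and the product of two ``$1$-like'' elements behaves well: if $\alpha \equiv 1$ and $\gamma \equiv 1$ then $\alpha \cdot \gamma \equiv 1$, and conversely if $\alpha_1 \cdots \alpha_n \equiv 1$ then, multiplying the congruence $\alpha_1 \cdots \alpha_n = 1$ by suitable elements and using $s \cdot s = s$, one recovers $\alpha_i \equiv 1$ for each $i$ (because $\alpha_i \cdot (\alpha_1 \cdots \alpha_n) = \alpha_i$ would need to be checked; more robustly, in $S_c[X]$ a product of elements equals $1$ only if each factor equals $1$, which one sees from the normal form $\sum^{\circ}_{k \in D} y_k$ and relation $(r_2)$). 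Therefore the congruence generated by $\{(\alpha_1,1),\dots,(\alpha_n,1)\}$ and the congruence generated by the single pair $(\alpha_1\cdots\alpha_n,1)$ coincide: each generating pair of one is derivable in the other. Consequently $\beta \equiv_{\Gamma'} 1$ holds if and only if $\beta \equiv_{\{\alpha_1\cdots\alpha_n\}} 1$, which is exactly $\alpha_1\cdot\alpha_2\cdots\alpha_n \vdash \beta$.

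I expect the main obstacle to be making the ``finite witness'' step rigorous, namely arguing carefully that membership in a congruence relation generated by a possibly infinite set of pairs is always witnessed by finitely many of those pairs. One clean way to do this is to write $\equiv_{\Gamma} = \bigcup_{\Gamma' \subseteq \Gamma,\ |\Gamma'| < \infty} \equiv_{\Gamma'}$ and verify that the right-hand side is already a congruence relation: it is reflexive, symmetric, and — because any two of its pairs come from two finite subsets whose union is still finite — transitive and compatible with both operations; since it contains all generators and is contained in every congruence containing them, it equals $\equiv_{\Gamma}$. A secondary technical point is the direction ``$\alpha_1\cdots\alpha_n \equiv 1$ implies each $\alpha_i \equiv 1$,'' which I would handle either via the normal form of Theorem~\ref{Theorem 3.1} together with relation $(r_2)$ (so a product of normal forms is $1$ only when every factor is $1$) or, more directly, by noting $\alpha_i \cdot (\alpha_1\cdots\alpha_n) = \alpha_1\cdots\alpha_n$ using commutativity and $s\cdot s = s$, hence $\alpha_i \equiv \alpha_1\cdots\alpha_n \equiv 1$ modulo the relevant congruence. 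Once these two points are settled the proposition follows by chaining the two reductions.
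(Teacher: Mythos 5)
Your proposal is correct, but it proves the statement by a different route than the paper. The paper passes to the semiring algebra $kS_c[X]$: from $\beta\equiv_{\Gamma}1$ it concludes that $\beta-1$ lies in the ideal generated by $\{\alpha-1\mid\alpha\in\Gamma\}$, extracts finitely many $\alpha_1,\dots,\alpha_n$ from a representation of $\beta-1$, and then asserts the equivalence of $\{\alpha_1,\dots,\alpha_n\}\vdash\beta$ with $\alpha_1\cdots\alpha_n\vdash\beta$ by appealing to the normal form of $S_c[X_N]$ (Theorem~\ref{Theorem 3.1}) for large $N$ --- essentially the set-theoretic criterion that appears later as Proposition~\ref{implication} and Corollary~\ref{finitejoininclude}. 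You instead stay entirely at the level of congruences: the finite-witness step via the directed union $\equiv_{\Gamma}=\bigcup_{\Gamma'\subseteq\Gamma,\,|\Gamma'|<\infty}\equiv_{\Gamma'}$ is standard and avoids invoking the congruence--ideal correspondence of the G-S framework, and your reduction to a single hypothesis, showing that $\langle(\alpha_1,1),\dots,(\alpha_n,1)\rangle=\langle(\alpha_1\cdots\alpha_n,1)\rangle$ via the identity $\alpha_i\cdot(\alpha_1\cdots\alpha_n)=\alpha_1\cdots\alpha_n$ (commutativity plus $s\cdot s=s$), is self-contained and does not need normal forms or a truncation to $S_c[X_N]$; this makes your argument more elementary and arguably cleaner than the paper's rather terse appeal to G-S theory, at the cost of not reusing the machinery the paper develops anyway. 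One small slip: the parenthetical identity $\alpha_i\cdot(\alpha_1\cdots\alpha_n)=\alpha_i$ in your second paragraph is false in general, and the remark that ``a product equals $1$ only if each factor equals $1$'' concerns equality in $S_c[X]$ rather than derivability modulo the congruence generated by $(\alpha_1\cdots\alpha_n,1)$, so it would not by itself give the inclusion you need; however, the direct idempotence argument you give in your final paragraph is the right one and closes the step.
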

\begin{proof}
Suppose $\Gamma\subseteq S_c[X]$, and $\Gamma\vdash \beta$. Then $\beta-1$ belongs to the ideal generated by $\{\alpha-1|\alpha\in\Gamma\}$ in the semiring algebra $kS_c[X]$.  Thus,  there exist $\alpha_1, \cdots, \alpha_n\in \Gamma$ such that $\beta-1$ can be represented by $\alpha_1-1, \cdots, \alpha_n-1$. Therefore, $\{\alpha_1,\alpha_2,\cdots,\alpha_n\}\vdash\beta$. Utilizing the normal form of $S_c[X_N]$ in Theorem \ref{Theorem 3.1} for a big enough $N\in\mathbb{N}$, this is equivalent to  $\alpha_1\cdot\alpha_2 \cdots\cdot\alpha_n\vdash\beta$.
\end{proof}

From the properties of the G-S basis, we can easily obtain the following conclusion:

\begin{prop}\label{finiteimplication}

Let $X=\{x_1,x_2,\cdots,x_n\}$ be a finite set, and $S_c[X]$ be the free I-C semiring generated by $X$. Suppose $\Gamma\subseteq S_c[X]$, and let $R$ be a G-S basis of the ideal in $kS_c[X]$ generated by $\{\alpha-1|\alpha\in\Gamma\}$. Then, for any $\beta\in S_c[X]$, $\Gamma\vdash \beta$ if and only if $\beta-1$ is trivial modulo $R$.
\end{prop}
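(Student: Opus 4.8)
The plan is to reduce the deduction relation $\Gamma\vdash\beta$ to a membership question in the semiring algebra $kS_c[X]$ and then invoke the defining property of a Gr\"obner--Shirshov basis. First I would recall from Definition~\ref{deducedef} that $\Gamma\vdash\beta$ means $\beta\equiv_\Gamma 1$, where $\equiv_\Gamma$ is the congruence on $S_c[X]$ generated by $\{(\alpha,1)\mid\alpha\in\Gamma\}$. Passing to the semiring algebra $kS_c[X]$, this congruence corresponds exactly to the two-sided ideal $I$ generated by $\{\alpha-1\mid\alpha\in\Gamma\}$: indeed $\beta\equiv_\Gamma 1$ in $S_c[X]$ if and only if $\beta-1\in I$, since $X$ is finite here so $S_c[X]=S_c[X_n]$ is already in the setting of Theorem~\ref{Theorem 3.1} and the correspondence between semiring congruences and algebra ideals is the standard one used throughout the paper (cf.\ the proof of Proposition~\ref{deduced}).

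Next I would use the hypothesis that $R$ is a Gr\"obner--Shirshov basis of $I$. By the Composition--Diamond lemma for semiring algebras (the basic fact underlying Theorem~\ref{Theorem 3.1}, see \cite{Bokut2013}), $R$ being a G-S basis of $I$ means precisely that for every $f\in kS_c[X]$, $f\in I$ if and only if the normal form of $f$ modulo $R$ is zero, i.e.\ $f$ reduces to $0$ by successive applications of the rewriting rules in $R$; equivalently, $f$ is ``trivial modulo $R$'' in the terminology of the statement. Applying this with $f=\beta-1$ gives: $\beta-1\in I$ if and only if $\beta-1$ is trivial modulo $R$. Chaining this with the previous paragraph yields $\Gamma\vdash\beta\iff\beta-1\in I\iff\beta-1$ is trivial modulo $R$, which is the claim.

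The one point requiring a little care — and the step I would flag as the main obstacle — is the very first equivalence: that the semiring congruence $\equiv_\Gamma$ on $S_c[X]$ corresponds exactly to the algebra ideal $I$, so that $\beta\equiv_\Gamma 1$ is genuinely equivalent to $\beta-1\in I$ (not merely implied by it). One direction is immediate, since $\alpha\equiv_\Gamma 1$ forces $\alpha-1\in I$ and congruences are compatible with the operations. For the converse one must check that the $k$-linear combinations witnessing $\beta-1\in I$ can be organized, using the semiring relations, into a chain of elementary congruence steps in $S_c[X]$; this is exactly the kind of bookkeeping already carried out in Proposition~\ref{deduced}, and since $S_c[X]$ is finite the normal-form description from Theorem~\ref{Theorem 3.1} makes it completely explicit. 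Once this bridge is in place, the rest is a direct translation of ``G-S basis'' into ``normal form decides ideal membership,'' so no further computation is needed.
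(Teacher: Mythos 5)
Your proposal is correct and follows exactly the route the paper intends: the paper itself omits the proof (asserting it follows ``from the properties of the G-S basis''), and your two steps --- identifying $\Gamma\vdash\beta$ with $\beta-1\in I$ in the semiring algebra $kS_c[X]$ (the same correspondence the paper already uses in Proposition~\ref{deduced}) and then applying the Composition--Diamond lemma so that membership in $I$ is equivalent to triviality of $\beta-1$ modulo the G-S basis $R$ --- are precisely that argument. The congruence/ideal equivalence you flag is the only substantive point, and your sketch of it is adequate (most cleanly seen via $kS_c[X]/I\cong k(S_c[X]/\equiv_\Gamma)$ together with the linear independence of distinct classes, which the finite normal-form description from Theorem~\ref{Theorem 3.1} makes explicit).
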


\begin{prop}\label{implication}
Let $X=\{x_1,x_2,\cdots,x_n,\cdots\}$ and $S_c[X]$ be the free I-C semiring generated by $X$. Suppose that  $\alpha=\overline{(\mathop{\sum^{\circ}}\limits_{i\in A} y_{i}^{(N)},N)}, \beta=\overline{(\mathop{\sum^{\circ}}\limits_{j\in B} y_{j}^{(N)},N)}$, then $\alpha\vdash \beta \Leftrightarrow A\subseteq B$.
\end{prop}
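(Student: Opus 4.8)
The plan is to reduce the claim to a purely combinatorial statement about the normal forms in $S_c[X_N]$, and then exploit Definition~\ref{deducedef} together with Proposition~\ref{finiteimplication}. Recall that $\alpha \vdash \beta$ means $\beta \equiv_{\alpha} 1$, i.e. $\beta = 1$ in the quotient of $S_c[X]$ by the congruence generated by $(\alpha, 1)$. Since $\alpha$ and $\beta$ both lie in the image of $S_c[X_N]$ under the canonical map, and the congruence generated by $(\alpha,1)$ is itself generated inside that image, one checks (using Proposition~\ref{deduced} and the fact that the $f_{NM}$ are monomorphisms, Proposition~\ref{fnmmonic}) that it suffices to work entirely inside the finite semiring $S_c[X_N]$. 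So the problem becomes: in $S_c[X_N]$, with $\alpha = \mathop{\sum^{\circ}}\limits_{i\in A} y_i^{(N)}$ and $\beta = \mathop{\sum^{\circ}}\limits_{j\in B} y_j^{(N)}$, show that $\beta \equiv_{\langle(\alpha,1)\rangle} 1$ if and only if $A \subseteq B$.

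Next I would analyze the quotient of $S_c[X_N]$ by the congruence identifying $\alpha$ with $1$. Using the G-S basis of Theorem~\ref{Theorem 3.1} and the relations $(r_1)$–$(r_8)$, every element of $S_c[X_N]$ is a sum $\mathop{\sum^{\circ}}_{k\in D} y_k^{(N)}$ (or $1$), and the $y_k^{(N)}$ behave like orthogonal idempotents: $y_k \cdot y_k = y_k$, $y_k \cdot y_j = \theta$ for $k \neq j$, while $1 = \mathop{\sum^{\circ}}_{k \in \aleph_N} y_k^{(N)}$. Setting $\alpha = 1$ forces, for each $k \notin A$, the relation $y_k^{(N)} \cdot \alpha = y_k^{(N)} \cdot 1$, i.e. $\theta = y_k^{(N)}$; conversely collapsing each $y_k^{(N)}$ with $k \notin A$ to $\theta$ already makes $\alpha$ equal to $1$. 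Hence the congruence generated by $(\alpha,1)$ is exactly the congruence collapsing $\{y_k^{(N)} : k \notin A\}$ to $\theta$, and in the quotient $\mathop{\sum^{\circ}}_{j \in B} y_j^{(N)} = \mathop{\sum^{\circ}}_{j \in B \cap A} y_j^{(N)}$. This equals the image of $1 = \mathop{\sum^{\circ}}_{k \in \aleph_N} y_k^{(N)}$, which in the quotient is $\mathop{\sum^{\circ}}_{k \in A} y_k^{(N)}$, precisely when $B \cap A = A$, that is, when $A \subseteq B$. The "if" direction is then immediate, and the "only if" direction follows because distinct sums $\mathop{\sum^{\circ}}_{k\in A} y_k$ with $A \subseteq A$ remain distinct in the quotient (the quotient's normal forms are indexed by subsets of $A$).

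The main obstacle I anticipate is making the descent to $S_c[X_N]$ fully rigorous: one must verify that the congruence $\equiv_{\langle(\alpha,1)\rangle}$ on $S_c[X]$ restricts correctly to $S_c[X_N]$ and does not create new identifications through the larger generators $x_{N+1}, x_{N+1}^c, \dots$ — equivalently, that $\beta \equiv_{\langle(\alpha,1)\rangle} 1$ in $S_c[X]$ iff the same holds in $S_c[X_N]$. This is where Proposition~\ref{fnmmonic} and the compatibility $f_{NK} = f_{MK} f_{NM}$ do the work: since $f_{NM}(y_k^{(N)}) = \mathop{\sum^{\circ}}_{l \bmod 2^N = k}^{l \le 2^M - 1} y_l^{(M)}$, collapsing $\{y_k^{(N)} : k \notin A\}$ to $\theta$ corresponds under $f_{NM}$ exactly to collapsing $\{y_l^{(M)} : l \bmod 2^N \notin A\}$ to $\theta$, and the two computations of $A \subseteq B$ agree. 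Once this bookkeeping is in place, the rest is the elementary orthogonal-idempotent calculation sketched above.
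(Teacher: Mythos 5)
Your reduction-to-$S_c[X_N]$ strategy is genuinely different from the paper's proof, which never descends to the finite case: the paper works with $\equiv_{\alpha}$ inside $S_c[X]$ itself, uses the permutation endomorphisms of Lemma~\ref{homexist} together with Lemma~\ref{homequiv} to show that if one $y_k^{(N)}$ with $k\in A$ were collapsed to $\theta$ then all of them would be, hence $1\equiv_{\alpha}\theta$, and excludes this by embedding $\equiv_{\alpha}$ in an auxiliary ``periodic'' congruence. Your finite-level computation is essentially correct: in $S_c[X_N]$ the congruence generated by $(\alpha,1)$ coincides with the one collapsing $\{y_k^{(N)}:k\notin A\}$ to $\theta$, its quotient has normal forms indexed by subsets of $A$ (this last assertion deserves its own short argument, e.g.\ via the G-S basis enlarged by the rules $y_k^{(N)}=\theta$, $k\notin A$), and there $\beta=1$ exactly when $A\subseteq B$.

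The gap is the descent itself, which you flag as the ``main obstacle'' but do not actually prove. The delicate direction is: if $\beta\equiv_{\langle(\alpha,1)\rangle}1$ in the infinitely generated $S_c[X]$, then the same identification already holds in $S_c[X_N]$; a priori the congruence generated in $S_c[X]$ could identify more elements of the image of $S_c[X_N]$ than the congruence generated in $S_c[X_N]$, since derivations may pass through $x_{N+1},x_{N+1}^c,\dots$. The tools you invoke do not supply this: Proposition~\ref{fnmmonic} gives injectivity of $f_{NM}$ on the semirings, not on their quotients by the generated congruences, and Proposition~\ref{deduced} concerns finiteness of the hypothesis set, not of the variables occurring in a congruence derivation. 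To close the gap you need either (i) a compactness argument: any derivation witnessing $\beta\equiv_{\alpha}1$ involves finitely many elements and hence takes place in some $S_c[X_M]$, combined with your level-$M$ computation that the congruence generated by $(f_{NM}(\alpha),1)$ collapses exactly the $y_l^{(M)}$ with $l\bmod 2^N\notin A$, so the criterion at level $M$ is still $A\subseteq B$; or (ii) more cheaply, a retraction: by freeness there is a homomorphism $r\colon S_c[X]\to S_c[X_N]$ with $r(x_i)=x_i$, $r(x_i^c)=x_i^c$ for $i\le N$ and $r(x_i)=1$, $r(x_i^c)=\theta$ for $i>N$; it fixes $S_c[X_N]$ and, by the argument in the proof of Lemma~\ref{rehom}, maps the congruence generated by $(\alpha,1)$ in $S_c[X]$ into the congruence generated by $(\alpha,1)$ in $S_c[X_N]$, so $\beta\equiv_{\alpha}1$ descends and your finite computation finishes. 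With either patch your route is complete and arguably more elementary than the paper's symmetry argument; as written, however, the crucial step is asserted rather than proved.
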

\begin{proof}
($\Rightarrow$). Let  $\equiv_{\alpha}$ be a congruence relation generated by $\alpha=1$ in $S_c[X]$, if $A\neq\emptyset$, then $\alpha\neq\theta$. It is easy to prove that $\equiv_{\alpha}$  is nontrivial, given that  $\equiv_{\alpha}\subseteq \equiv=<\{\alpha=1,\overline{(x_i,i)}=\overline{(x_{i+kN},i+kN)},\overline{(x_i^c,i)}=\overline{(x_{i+kN}^c,i+kN)}|i=1,2,\cdots,k=0,1,\cdots\}>$

Assume that $A\nsubseteqq B$. Then $A\neq\emptyset$, and there is a $k \in A-B$. Consequently,  $\overline{(y_{k}^{(N)},N)}\cdot\beta\equiv_{\alpha}\overline{(y_{k}^{(N)},N)}$ for $\beta\equiv_{\alpha}1$, implying  $\overline{(y_{k}^{(N)},N)}\equiv_{\alpha}\theta$. Let $\pi$ be any permutation on $\aleph_N=\{0,1,\cdots,2^N-1\}$ satisfying that $\pi(i)=i$ when $i\notin A$. There exists a  $\tau\in End(S_c[X])$, such that $\tau(\overline{(y_i^{(N)},N)})=\overline{(y_{\pi(i)}^{(N)},N)}$ for $i\in\aleph_N$ by Lemma \ref{homexist}. Since $\tau(\alpha)=\alpha\equiv_{\alpha}1=\tau(1)$, we have  $\tau(x)\equiv_{\alpha}\tau(y)$ for any $x\equiv_{\alpha}y\in S_c[X]$ by Lemma \ref{homequiv}. Then
$\overline{(y_{\pi(k)}^{(N)},N)}\equiv_{\alpha}\theta$, so $\overline{(y_{i}^{(N)},N)}\equiv_{\alpha}\theta$ for any $i\in A$ by considering $\pi|A$ as an arbitrary permutation on $A$. Consequently, $1\equiv_{\alpha}\alpha\equiv_{\alpha}\theta$, leading to the conclusion that $\equiv_{\alpha}$ is trivial, which is a contradiction.

($\Leftarrow$).Obviously.
\end{proof}

\begin{prop}\label{implicationandinclude}
For any $\alpha,\beta\in S_c[X]$, $\alpha\vdash\beta$ iff $\alpha\rightarrow\beta=1$.
\end{prop}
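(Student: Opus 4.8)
The plan is to reduce the statement to \propref{implication} by passing to a common finite stage $S_c[X_N]$ and then computing with normal forms. First I would fix representatives: given arbitrary $\alpha,\beta\in S_c[X]$, there is some $N\in\mathbb{N}^+$ with $\alpha=\overline{(\sum^{\circ}_{i\in A}y_i^{(N)},N)}$ and $\beta=\overline{(\sum^{\circ}_{j\in B}y_j^{(N)},N)}$ for suitable $A,B\subseteq\aleph_N$ (if the two elements are originally presented at different stages, push both up to the larger one via the monomorphisms $f_{NM}$ of \propref{fnmmonic}). By \propref{implication} it then suffices to prove that $\alpha\rightarrow\beta=1$ is equivalent to $A\subseteq B$.

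The core computation is to identify the normal form of $\alpha\rightarrow\beta=\alpha^c\circ\beta$ in $S_c[X_N]$. Here I would use the normal-form description coming from \thmref{Theorem 3.1}: the complement of $\sum^{\circ}_{i\in A}y_i^{(N)}$ is $\sum^{\circ}_{i\in\aleph_N\setminus A}y_i^{(N)}$ (this is how complementation acts on the $y$-basis, since the $y_i^{(N)}$ are the orthogonal idempotents with $\sum^{\circ}_{i\in\aleph_N}y_i^{(N)}=1$, $y_i^{(N)}\cdot y_j^{(N)}=\theta$ for $i\neq j$), possibly with the convention $\alpha^c=1$ when $A=\aleph_N$. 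Then, using $y_i^{(N)}\circ y_i^{(N)}=y_i^{(N)}$ and rule $(r_6)$–$(r_8)$, we get
\[
\alpha^c\circ\beta=\mathop{\sum^{\circ}}\limits_{i\in(\aleph_N\setminus A)\cup B}y_i^{(N)},
\]
with the understanding that this equals $1$ precisely when $(\aleph_N\setminus A)\cup B=\aleph_N$. So $\alpha\rightarrow\beta=1$ iff $\aleph_N\setminus A\subseteq\aleph_N$ together with $\aleph_N\setminus(\aleph_N\setminus A)=A\subseteq B$, i.e. iff $A\subseteq B$; chaining this with \propref{implication} gives $\alpha\vdash\beta\iff A\subseteq B\iff\alpha\rightarrow\beta=1$, which is the claim.

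One technical point to handle carefully is the boundary case where $A=\emptyset$ (so $\alpha=\theta$, $\alpha^c=1$, and $\alpha\rightarrow\beta=1$ trivially, consistent with $\emptyset\subseteq B$) and the case $B=\aleph_N$; in both the identification of the normal form with $1$ must be stated using rule $(r_6)$ rather than a literal $\sum^{\circ}$ over all indices, matching the normal-form set $\{1,\sum^{\circ}_{k\in D}y_k\}$ with $D\subsetneq\{0,\dots,m-1\}$ displayed after \thmref{Theorem 3.1}. I expect the main obstacle to be purely bookkeeping: verifying that complementation really sends $\sum^{\circ}_{i\in A}y_i^{(N)}$ to $\sum^{\circ}_{i\in\aleph_N\setminus A}y_i^{(N)}$ at the level of normal forms, and that this is stable under the stage maps $f_{NM}$ so that the choice of $N$ is immaterial — but both follow mechanically from the G-S basis $R_1$ and from the explicit form of $f_{NM}(y_k^{(N)})$ computed in the proof of \propref{fnmmonic}. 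No genuinely hard step is anticipated; the proposition is essentially a dictionary entry translating the set-theoretic criterion of \propref{implication} into the implication notation of \defref{implydef}.
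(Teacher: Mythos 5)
Your proposal is correct and follows essentially the same route as the paper: pass to a common stage $N$, use the normal forms to see that $\alpha^c$ has index set $\aleph_N\setminus A$ and $\alpha^c\circ\beta$ has index set $(\aleph_N\setminus A)\cup B$, note via the monomorphisms $f_{NM}$ of \propref{fnmmonic} that such an element equals $1$ exactly when its index set is all of $\aleph_N$, and chain the resulting equivalence $(\aleph_N\setminus A)\cup B=\aleph_N\iff A\subseteq B$ with \propref{implication}. No substantive difference from the paper's argument.
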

\begin{proof}
Suppose $\alpha=\overline{(\mathop{\sum^{\circ}}\limits_{i\in A} y_{i}^{(N)},N)}, \beta=\overline{(\mathop{\sum^{\circ}}\limits_{j\in B} y_{j}^{(N)},N)}$. By Proposition \ref{fnmmonic}, $f_{NM}$ is a monomorphism, so $\overline{(\mathop{\sum^{\circ}}\limits_{i\in C} y_{i}^{(N)},N)}=1$ if and only if $C=\aleph_N$ by using the normal forms of $S_c[X_N]$.

($\Rightarrow$). By Proposition \ref{implication}, $A\subseteq B$, $\alpha^c=\overline{(\mathop{\sum^{\circ}}\limits_{i\in \aleph_N-A} y_{i}^{(N)},N)}$, $\alpha^c\circ\beta=\overline{(\mathop{\sum^{\circ}}\limits_{i\in (\aleph_N-A)\cup B} y_{i}^{(N)},N)}$, so $\alpha^c\circ\beta=1$ for $A\subseteq B$, $\alpha\rightarrow\beta=1$.

($\Leftarrow$). Since $\alpha^c\circ\beta=1$, then $(\aleph_N-A)\cup B=\aleph_N$, $A\cap B=A$, hence $A\subseteq B$, $\alpha\vdash\beta$.
\end{proof}
\begin{cor}\label{finitejoininclude}
  Let $\Gamma=\{\alpha_1,\alpha_2,\cdots,\alpha_k\}\subseteq S_c[X]$, and $\alpha_i=\overline{(\mathop{\sum^{\circ}}\limits_{j\in B_i} y_{j}^{(N)},N)}$, $\beta=\overline{(\mathop{\sum^{\circ}}\limits_{j\in B} y_{j}^{(N)},N)}$. Then $\Gamma\vdash\beta$ iff $\bigcap\limits_{i=1}^kB_i\subseteq B$
\end{cor}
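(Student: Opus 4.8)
The plan is to reduce the corollary to Proposition~\ref{deduced} and Proposition~\ref{implication}, together with the observation about how products of the $\alpha_i$ are computed in the normal form of $S_c[X_N]$. First I would recall that by Proposition~\ref{deduced}, $\Gamma\vdash\beta$ is equivalent to $\alpha_1\cdot\alpha_2\cdots\alpha_k\vdash\beta$, since $\Gamma$ is finite here; so it suffices to identify the normal form of the product $\alpha_1\cdot\alpha_2\cdots\alpha_k$ in $S_c[X_N]$ and then invoke Proposition~\ref{implication}.

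Next I would carry out the product computation using the G-S basis of Theorem~\ref{Theorem 3.1}, specifically relations $(r_1)$ and $(r_2)$: $y_j^{(N)}\cdot y_j^{(N)}=y_j^{(N)}$ and $y_j^{(N)}\cdot y_l^{(N)}=\theta$ for $j\neq l$. Hence for $\alpha_i=\overline{(\sum^{\circ}_{j\in B_i}y_j^{(N)},N)}$, distributing the multiplication over $\circ$ and applying these two relations yields
\[
\alpha_1\cdot\alpha_2\cdots\alpha_k=\overline{\Bigl(\mathop{\sum^{\circ}}\limits_{j\in \bigcap_{i=1}^k B_i} y_j^{(N)},\,N\Bigr)},
\]
with the convention that an empty $\circ$-sum equals $\theta$. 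This is a routine induction on $k$, the base case $k=1$ being trivial and the inductive step multiplying $\overline{(\sum^{\circ}_{j\in B_1\cap\cdots\cap B_{k-1}}y_j^{(N)},N)}$ by $\alpha_k$ and collapsing cross terms to $\theta$.

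Finally I would apply Proposition~\ref{implication} with $\alpha=\alpha_1\cdot\alpha_2\cdots\alpha_k$, whose index set is $A=\bigcap_{i=1}^k B_i$: we get $\alpha_1\cdot\alpha_2\cdots\alpha_k\vdash\beta$ iff $\bigcap_{i=1}^k B_i\subseteq B$. Combining with the equivalence from Proposition~\ref{deduced} gives $\Gamma\vdash\beta$ iff $\bigcap_{i=1}^k B_i\subseteq B$, as claimed. I expect no serious obstacle; the only point requiring mild care is the degenerate case $\bigcap_{i=1}^k B_i=\emptyset$, where $\alpha_1\cdots\alpha_k=\theta$ and one should note that $\theta\vdash\beta$ always holds (the empty set is contained in $B$), so the statement remains correct. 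One should also make sure all $\alpha_i$ and $\beta$ are written with a common index $N$, which is harmless since any finite family of elements of $S_c[X]$ can be represented at a sufficiently large level $N$ via the monomorphisms $f_{NM}$ of Proposition~\ref{fnmmonic}.
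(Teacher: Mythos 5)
Your proposal is correct and follows exactly the route the paper intends for this corollary (stated there without proof): reduce the finite set $\Gamma$ to the single hypothesis $\alpha_1\cdot\alpha_2\cdots\alpha_k$ via Proposition~\ref{deduced}, compute the normal form of the product as $\mathop{\sum^{\circ}}\limits_{j\in \bigcap_i B_i} y_j^{(N)}$ using relations $(r_1)$, $(r_2)$ of Theorem~\ref{Theorem 3.1}, and conclude by Proposition~\ref{implication}. Your handling of the degenerate case $\bigcap_i B_i=\emptyset$ (product equal to $\theta$, from which everything is deducible) is a sensible extra check and causes no discrepancy.
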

\begin{cor}\label{infinitejioninclude}
Let $\Gamma=\{\alpha_i\}_{i \in I}\subseteq S_c[X]$, and $\alpha_i=\overline{(\mathop{\sum^{\circ}}\limits_{j\in B_i^{(N_i)}} y_{j}^{(N_i)},N_i)}$, $\beta=\overline{(\mathop{\sum^{\circ}}\limits_{j\in B^{(M)}} y_{j}^{(M)},M)}$. Then $\Gamma\vdash\beta$ iff  there exist a finite subset $J\subseteq I$ and an  $N\in \mathbb{N}$ such that $\alpha_i=\overline{(\mathop{\sum^{\circ}}\limits_{j\in B_i^{(N)}} y_{j}^{(N)},N)}$ for any $i\in J$, $\beta=\overline{(\mathop{\sum^{\circ}}\limits_{j\in B^{(N)}} y_{j}^{(N)},N)}$,  and $\bigcap\limits_{i\in J}B_i^{(N)}\subseteq B^{(N)}$.
\end{cor}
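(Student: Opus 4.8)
The plan is to deduce Corollary \ref{infinitejioninclude} from the already-established finite case (Corollary \ref{finitejoininclude}) by a standard ``compactness'' argument based on the direct limit structure of $S_c[X]$. First I would invoke Proposition \ref{deduced}: if $\Gamma\vdash\beta$, then there are finitely many $\alpha_{i_1},\dots,\alpha_{i_r}\in\Gamma$ with $\alpha_{i_1}\cdot\alpha_{i_2}\cdots\alpha_{i_r}\vdash\beta$; set $J=\{i_1,\dots,i_r\}$. Each $\alpha_i$ for $i\in J$ is represented at some level $N_i$ and $\beta$ at level $M$, so taking $N=\max(\{N_i : i\in J\}\cup\{M\})$ and applying the transition monomorphisms $f_{N_iN}$ and $f_{MN}$ (Proposition \ref{fnmmonic}), I can rewrite every element of $\{\alpha_i : i\in J\}\cup\{\beta\}$ uniformly at level $N$, obtaining index sets $B_i^{(N)}\subseteq\aleph_N$ and $B^{(N)}\subseteq\aleph_N$ via the explicit formula $f_{N_iN}(y_j^{(N_i)})=\mathop{\sum^{\circ}}_{l\bmod 2^{N_i}=j}^{l\le 2^N-1}y_l^{(N)}$ from the proof of Proposition \ref{fnmmonic}. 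Then Corollary \ref{finitejoininclude}, applied inside $S_c[X]$ with all data at level $N$, yields $\bigcap_{i\in J}B_i^{(N)}\subseteq B^{(N)}$, which is the forward direction.

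For the converse, suppose such a finite $J$ and level $N$ exist with $\bigcap_{i\in J}B_i^{(N)}\subseteq B^{(N)}$. Then Corollary \ref{finitejoininclude} gives $\{\alpha_i : i\in J\}\vdash\beta$ directly, and since $J\subseteq I$ means $\{\alpha_i : i\in J\}\subseteq\Gamma$, the congruence relation $\equiv_{\{\alpha_i : i\in J\}}$ is contained in $\equiv_{\Gamma}$, so $\beta\equiv_{\{\alpha_i:i\in J\}}1$ forces $\beta\equiv_\Gamma 1$, i.e.\ $\Gamma\vdash\beta$. This direction is essentially immediate from the definition of deduction (Definition \ref{deducedef}) together with monotonicity of the generated congruence in the generating set.

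The one point that needs genuine care — and which I expect to be the main obstacle — is the bookkeeping in the ``common level'' step: one must check that pushing the $\alpha_i$ (for $i \in J$) and $\beta$ forward along the $f$-maps is compatible, i.e.\ that $f_{N_iN}(\alpha_i)$ really has the claimed normal form $\mathop{\sum^{\circ}}_{j\in B_i^{(N)}}y_j^{(N)}$ with $B_i^{(N)}=\{l\le 2^N-1 : (l\bmod 2^{N_i})\in B_i^{(N_i)}\}$, and that under this identification the set-theoretic statement ``$\bigcap_{i\in J}B_i^{(N)}\subseteq B^{(N)}$'' is exactly the hypothesis of Corollary \ref{finitejoininclude}. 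Since $f_{N_iN}$ is a monomorphism, the representation is unique and no information is lost, so the intersection condition transports faithfully; but the index arithmetic modulo powers of $2$ must be written out carefully. I would also remark that if $\Gamma=\emptyset$ (so $J=\emptyset$), the empty intersection is $\aleph_N$, and $\Gamma\vdash\beta$ holds iff $\beta=1$ iff $B^{(N)}=\aleph_N$, consistent with the statement. Everything else — well-definedness of the rewriting, commutativity of the transition maps — follows from Propositions \ref{fnmmonic} and the composition law $f_{NK}=f_{MK}\cdot f_{NM}$ already available.
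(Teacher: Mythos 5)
Your argument is correct and is essentially the route the paper intends: the corollary is stated without proof as an immediate consequence of the finiteness result (Proposition \ref{deduced}) combined with the finite-level criterion (Corollary \ref{finitejoininclude}), after lifting all data to a common level via the monomorphisms $f_{NM}$. Your converse via monotonicity of the generated congruence and your handling of the level-raising bookkeeping match what the paper takes for granted.
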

Denote $A_i=\{2^{i-1}+\sum\limits_{j=1,j\neq i}^kl_j2^{j-1}|l_j=0,1\ and\ k\in\mathbb{N}^+\}$, $A_i^c=\{\sum\limits_{j=1,j\neq i}^kl_j2^{j-1}|l_j=0,1\ and\ k\in\mathbb{N}^+\}$.

\begin{prop}\label{ktoinfinite}
  Suppose 
   $f(A_1^{(k)},\cdots,A_k^{(k)},A_1^{c(k)},\cdots,A_k^{c(k)})$ and

   $g(A_1^{(k)},\cdots,A_k^{(k)},A_1^{c(k)},\cdots,A_k^{c(k)})$ are two mathematical expressions consisting of $\{A_i^{(k)},A_i^{c(k)}\}$ and set operations $\{\cap,\cup\}$.

   $f(A_1^{(k)},\cdots,A_k^{(k)},A_1^{c(k)},\cdots,A_k^{c(k)})\subseteq g(A_1^{(k)},\cdots,A_k^{(k)},A_1^{c(k)},\cdots,A_k^{c(k)})$ iff

   $f(A_1,\cdots,A_k,A_1^c,\cdots,A_k^c)\subseteq g(A_1,\cdots,A_k,A_1^c,\cdots,A_k^c)$.
\end{prop}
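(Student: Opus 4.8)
The plan is to reduce the statement to a comparison between the ``finite-stage'' set operations (living in $\aleph_k = \{0,1,\dots,2^k-1\}$) and the ``limit-stage'' operations, and to exploit the stabilization provided by the direct system. First I would observe that the sets $A_i$ and $A_i^c$ are precisely the unions (over all $m \geq k$, or equivalently over the direct limit) of the images of the finite-stage sets $A_i^{(m)}$ and $A_i^{c(m)}$ under the inclusions $\aleph_k \hookrightarrow \aleph_m$ induced by $l \mapsto l$; concretely, $j \in A_i$ iff the $i$-th binary digit of $j$ is $1$, with no upper bound on the number of digits, which is exactly $\bigcup_{m \geq k} A_i^{(m)}$ after the obvious identification. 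The key technical input is the formula from the proof of Proposition~\ref{fnmmonic}, namely $f_{km}(y_j^{(k)}) = \mathop{\sum^{\circ}}\limits_{l \bmod 2^k = j}^{l \leq 2^m-1} y_l^{(m)}$, which tells us how index sets transform along the direct system: a subset $C \subseteq \aleph_k$ corresponds, at stage $m$, to $\{\, l \in \aleph_m : l \bmod 2^k \in C \,\}$, and in the limit to $\{\, j \in \mathbb{N} : (j \bmod 2^k) \in C \,\}$ — call this the \emph{lift} of $C$.

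Next I would verify the crucial compatibility: lifting commutes with $\cap$ and $\cup$. That is, if $\mathrm{lift}(C)$ denotes the lift of $C \subseteq \aleph_k$ just described, then $\mathrm{lift}(C \cap C') = \mathrm{lift}(C) \cap \mathrm{lift}(C')$ and $\mathrm{lift}(C \cup C') = \mathrm{lift}(C) \cup \mathrm{lift}(C')$; this is immediate since membership in a lift depends only on the residue $j \bmod 2^k$. Moreover $\mathrm{lift}(A_i^{(k)}) = A_i$ and $\mathrm{lift}(A_i^{c(k)}) = A_i^c$ by the binary-digit description. By induction on the structure of the expression, it follows that $f(A_1,\dots,A_k,A_1^c,\dots,A_k^c) = \mathrm{lift}\big(f(A_1^{(k)},\dots,A_k^{(k)},A_1^{c(k)},\dots,A_k^{c(k)})\big)$, and likewise for $g$. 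Finally, since lifting is clearly injective and monotone with respect to $\subseteq$ (again because $C \mapsto \mathrm{lift}(C)$ only reads residues and is a bijection between $\mathcal{P}(\aleph_k)$ and the lattice of lifts, preserving and reflecting inclusions), we get $f(\dots^{(k)}) \subseteq g(\dots^{(k)})$ iff $\mathrm{lift}(f(\dots^{(k)})) \subseteq \mathrm{lift}(g(\dots^{(k)}))$, which is exactly the desired equivalence.

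Alternatively — and this is perhaps the cleaner route — I would phrase the whole thing semiring-theoretically: under $r_4$ and $r_5$ of Theorem~\ref{Theorem 3.1}, the set $A_i^{(k)}$ is the index set of $x_i$ in $S_c[X_k]$ and $A_i^{c(k)}$ that of $x_i^c$, so $f(A_1^{(k)},\dots)$ and $g(A_1^{(k)},\dots)$ are the index sets of two elements $\hat f, \hat g \in S_c[X_k]$ built from $x_1,\dots,x_k,x_1^c,\dots,x_k^c$ by $\cdot$ (for $\cap$) and $\circ$ (for $\cup$); similarly $A_i, A_i^c$ are the index sets of the images of $x_i, x_i^c$ in $S_c[X]$, so $f(A_1,\dots), g(A_1,\dots)$ are index sets of the corresponding elements $\hat F, \hat G \in S_c[X]$. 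By Proposition~\ref{implication} (via Proposition~\ref{implicationandinclude}), $f(A_1^{(k)},\dots) \subseteq g(A_1^{(k)},\dots)$ iff $\hat f \vdash \hat g$ in $S_c[X_k]$, and $f(A_1,\dots) \subseteq g(A_1,\dots)$ iff $\hat F \vdash \hat G$ in $S_c[X]$; but $\hat F, \hat G$ are precisely the images of $\hat f, \hat g$ under the embedding $S_c[X_k] \hookrightarrow S_c[X]$, and an inclusion of index sets is detected at any sufficiently large stage, so the two deducibility statements coincide. The main obstacle in either approach is purely bookkeeping: one must be careful that ``the expression $f$'' really denotes a fixed finite syntactic tree of $\cap$'s and $\cup$'s (so that the structural induction is legitimate and the same tree is evaluated in both $\aleph_k$ and $\mathbb{N}$), and that the identification of $\aleph_k$-indices with residues of $\mathbb{N}$-indices is carried out consistently with the formula for $f_{km}$; there is no deep difficulty once the lift map and its residue-only nature are isolated.
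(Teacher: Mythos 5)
Your primary (lift) argument is essentially the paper's own proof: the paper likewise shows, by induction on the length of the expression, that $x\in f(A_1,\cdots,A_k,A_1^c,\cdots,A_k^c)$ iff $x\bmod 2^k\in f(A_1^{(k)},\cdots,A_k^{(k)},A_1^{c(k)},\cdots,A_k^{c(k)})$, which is exactly your identity $f(A_1,\cdots,A_k,A_1^c,\cdots,A_k^c)=\mathrm{lift}\bigl(f(A_1^{(k)},\cdots,A_k^{(k)},A_1^{c(k)},\cdots,A_k^{c(k)})\bigr)$, with the inclusion equivalence then following since the lift preserves and reflects inclusions. One caution about your ``alternative'' semiring-theoretic route: the equivalence between $f(A_1,\cdots)\subseteq g(A_1,\cdots)$ and deducibility in $S_c[X]$ is only obtained in Theorem~\ref{mainresult}, whose proof relies on Proposition~\ref{ktoinfinite} itself, so that version would be circular unless you re-establish this equivalence by the same residue argument; keep the first route.
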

\begin{proof}
  For any $x\in f(A_1,\cdots,A_k,A_1^c,\cdots,A_k^c)$,

  we can prove $x\mod2^k\in f(A_1^{(k)},\cdots,A_k^{(k)},A_1^{c(k)},\cdots,A_k^{c(k)})$ by using  induction on the length of $f$. On the other hand, we can also by using induction on the length of $f$ prove that  $x\in f(A_1,\cdots,A_k,A_1^c,\cdots,A_k^c)$ if $x\mod2^k\in f(A_1^{(k)},\cdots,A_k^{(k)},A_1^{c(k)},\cdots,A_k^{c(k)})$.

\end{proof}

Let $\alpha=U_1\circ U_2\cdots\circ U_m\in S_c[X_N]$, $U_k=x_1^{k_1}\cdot x_2^{k_2}\cdots \cdot x_N^{k_N}$,  $x_l^h$ denote $1,x_l,x_l^c$ when $h=0,1,c$. We denote $A_{\alpha}^{(N)}=\bigcup\limits_{i=1}^m\bigcap\limits_{j=1}^{N}A_j^{i_j(N)}$, where $A_j^{i_j(N)}=\aleph_N,A_j^{(N)},A_j^{c(N)}$ when $i_j=0,1,c$.  We also denote
$A_{\alpha}=\bigcup\limits_{i=1}^m\bigcap\limits_{j=1}^{N}A_j^{i_j}$, where $A_j^{i_j}=\mathbb{N},A_j,A_j^{c}$ when $i_j=0,1,c$.

\begin{thm}\label{mainresult}
  Let $\Gamma=\{\Lambda_1,\cdots,\Lambda_n\}\subseteq S_c[X]$, $\Lambda_i=\overline{(\alpha_i,N_i)},\ \alpha_i\in S_c[X_{N_i}]$, $\Lambda=\overline{(\beta,N)},\ \beta\in S_c[X_N]$. Then $\Gamma\vdash\Lambda$ iff $\bigcap\limits_{i=1}^n A_{\alpha_i}^{(M)}\subseteq A_{\beta}^{(M)}$ for some big enough $M\in \mathbb{N}$, iff $\bigcap\limits_{i=1}^nA_{\alpha_i}\subseteq A_{\beta}$.
\end{thm}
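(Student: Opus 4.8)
### Proof proposal for Theorem~\ref{mainresult}

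The plan is to reduce the general statement to the finitely generated case handled by Corollary~\ref{finitejoininclude}, then transport that criterion to the ``infinite index set'' formulation via Proposition~\ref{ktoinfinite}. First I would observe that $\Gamma$ is finite, so I can choose a single $M\in\mathbb{N}$ with $M\geq N$ and $M\geq N_i$ for all $i$; using the embeddings $f_{N_iM}$ and $f_{NM}$ (which are monomorphisms by Proposition~\ref{fnmmonic}), every $\Lambda_i$ and $\Lambda$ can be represented uniformly at level $M$, say $\Lambda_i=\overline{(\mathop{\sum^{\circ}}\limits_{j\in B_i}y_j^{(M)},M)}$ and $\Lambda=\overline{(\mathop{\sum^{\circ}}\limits_{j\in B}y_j^{(M)},M)}$. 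The key computation here is to identify the index set $B_i$ with $A_{\alpha_i}^{(M)}$: writing $\alpha_i=U_1\circ\cdots\circ U_{m}$ in $S_c[X_{N_i}]$, applying rules $(r_4)$ and $(r_5)$ of Theorem~\ref{Theorem 3.1} (at level $M$) to each literal $x_l$ or $x_l^c$ and using $(r_1),(r_2)$ to multiply out each monomial $U_k$, one gets exactly $B_i=\bigcup_k\bigcap_j A_j^{(i_k)_j(M)}=A_{\alpha_i}^{(M)}$, and similarly $B=A_\beta^{(M)}$. This is essentially the same normal-form bookkeeping used in the proof of Proposition~\ref{fnmmonic}, just applied to an arbitrary element rather than a single generator.

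Once that identification is in place, Corollary~\ref{finitejoininclude} gives immediately that $\Gamma\vdash\Lambda$ iff $\bigcap_{i=1}^nB_i\subseteq B$, i.e.\ iff $\bigcap_{i=1}^n A_{\alpha_i}^{(M)}\subseteq A_\beta^{(M)}$, which is the first equivalence. I should also note that this inclusion is independent of the particular ``big enough'' $M$: if it holds for one such $M$ it holds for all larger ones, because passing from level $M$ to level $M'$ replaces each index $j$ by the full fiber $\{l\leq 2^{M'}-1: l\bmod 2^M=j\}$ under $f_{MM'}$, and this operation is monotone and commutes with $\cap$ and $\cup$ on index sets — so the statement ``for some big enough $M$'' is unambiguous.

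For the second equivalence I would invoke Proposition~\ref{ktoinfinite}. The expressions $A_{\alpha_i}^{(M)}$ and $A_\beta^{(M)}$ are, by construction, built from the ``truncated'' sets $A_j^{(M)}$, $A_j^{c(M)}$ (and $\aleph_M$, which can be written as $A_j^{(M)}\cup A_j^{c(M)}$) using only $\cap$ and $\cup$; the corresponding expressions built from the ``infinite'' sets $A_j$, $A_j^c$, $\mathbb{N}$ are precisely $A_{\alpha_i}$ and $A_\beta$. Hence $\bigcap_{i=1}^n A_{\alpha_i}^{(M)}\subseteq A_\beta^{(M)}$ is an inclusion of the truncated type and $\bigcap_{i=1}^n A_{\alpha_i}\subseteq A_\beta$ the corresponding infinite one, and Proposition~\ref{ktoinfinite} says these are equivalent (with $k=M$). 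One small point to check is that $A_j^{(k)}$ and $A_j$, as defined just before the theorem via the $j$-th binary digit, agree with $A_j^{i_j(N)}$, $A_j^{i_j}$ as used in the definition of $A_\alpha^{(N)}$, $A_\alpha$ — this is a matter of unwinding definitions.

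I expect the main obstacle to be the bookkeeping in the first paragraph: carefully verifying that the normal form of $f_{N_iM}(\alpha_i)$ in $S_c[X_M]$ has index set exactly $A_{\alpha_i}^{(M)}$, including the edge cases where some $U_k$ involves a variable $x_l$ with $l>N_i$ implicitly (i.e.\ the $i_l=0$ slots contributing $\aleph_M$), and the degenerate cases $\alpha_i=\theta$ or $\alpha_i=1$. Everything after that is a clean application of Corollary~\ref{finitejoininclude} and Proposition~\ref{ktoinfinite}; the only other thing to be careful about is stating the ``for some big enough $M$'' clause so that it is genuinely equivalent to ``for all sufficiently large $M$,'' which follows from the monotonicity remark above.
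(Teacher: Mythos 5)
Your proposal is correct and follows essentially the same route as the paper: lift all $\Lambda_i$ and $\Lambda$ to a common level $M$, identify the normal-form index sets with $A_{\alpha_i}^{(M)}$ and $A_{\beta}^{(M)}$, apply Corollary~\ref{finitejoininclude} for the first equivalence, and Proposition~\ref{ktoinfinite} for the second. The extra bookkeeping you flag (verifying $f_{N_iM}(\alpha_i)=\mathop{\sum^{\circ}}\limits_{j\in A_{\alpha_i}^{(M)}} y_{j}^{(M)}$ and the independence of the choice of large $M$) is exactly what the paper asserts without detail, so your version is, if anything, slightly more careful.
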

\begin{proof}
  For $i\in \{1,2,\cdots,n\}$, $\alpha_i=\mathop{\sum^{\circ}}\limits_{j\in A_{\alpha_i}^{(N_i)}} y_{j}^{(N_i)}$ by the construction of $A_{\alpha}^{(N)}$, and for any $M\geq N_i$,
  $f_{N_iM}(\alpha_i)=\mathop{\sum^{\circ}}\limits_{j\in A_{\alpha_i}^{(M)}} y_{j}^{(M)}$. Therefore, we can choose an $M\geq N_1,\cdots,N_n,N$, such that $\Lambda_i=\overline{(f_{N_iM}(\alpha_i),M)},\ \Lambda=\overline{(f_{NM}(\beta),M)}$. By Corollary \ref{finitejoininclude}, $\Gamma\vdash\Lambda$ iff $\bigcap\limits_{i=1}^n A_{\alpha_i}^{(M)}\subseteq A_{\beta}^{(M)}$, iff $\bigcap\limits_{i=1}^nA_{\alpha_i}\subseteq A_{\beta}$ by Proposition \ref{ktoinfinite}.
\end{proof}

\begin{cor}\label{iffofinfinite}
  Let $\Gamma=\{\Lambda_i\}_{i\in I}\subseteq S_c[X]$, $\Lambda_i=\overline{(\alpha_i,N_i)},\ \alpha_i\in S_c[X_{N_i}]$, $\Lambda=\overline{(\beta,N)},\ \beta\in S_c[X_N]$. Then $\Gamma\vdash\Lambda$ iff there exists a finite subset $J\subseteq I$  such that $\bigcap\limits_{i\in J}A_{\alpha_i}\subseteq A_{\beta}$.
\end{cor}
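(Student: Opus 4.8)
\section*{Proof proposal for Corollary \ref{iffofinfinite}}

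The plan is to reduce the infinite case to the finite case already settled in Theorem \ref{mainresult}, using the compactness‐type behaviour of deductions recorded in Definition \ref{deducedef} together with Proposition \ref{deduced} and Corollary \ref{infinitejioninclude}. The key observation is that the congruence relation $\equiv_{\Gamma}$ generated by $\{(\alpha,1)\mid\alpha\in\Gamma\}$ is, by construction, the union of the congruences $\equiv_{\Gamma'}$ over all \emph{finite} $\Gamma'\subseteq\Gamma$; hence $\beta\equiv_{\Gamma}1$ forces $\beta\equiv_{\Gamma'}1$ for some finite $\Gamma'$, i.e.\ $\Gamma'\vdash\Lambda$.

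First I would prove the forward direction. Assume $\Gamma\vdash\Lambda$. By the finiteness of the generating process for a congruence relation (each element of $\equiv_{\Gamma}$ is witnessed by finitely many applications of finitely many generating pairs), there is a finite subset $J\subseteq I$ with $\{\Lambda_i\}_{i\in J}\vdash\Lambda$; alternatively one invokes Proposition \ref{deduced} to extract finitely many $\Lambda_{i_1},\dots,\Lambda_{i_r}$ from $\Gamma$ with $\Lambda_{i_1}\cdot\cdots\cdot\Lambda_{i_r}\vdash\Lambda$. Now apply Theorem \ref{mainresult} to the finite set $\{\Lambda_i\}_{i\in J}$: it yields $\bigcap_{i\in J}A_{\alpha_i}\subseteq A_{\beta}$, which is exactly the asserted condition. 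For the converse, suppose such a finite $J$ exists with $\bigcap_{i\in J}A_{\alpha_i}\subseteq A_{\beta}$. Then Theorem \ref{mainresult}, applied again to $\{\Lambda_i\}_{i\in J}$, gives $\{\Lambda_i\}_{i\in J}\vdash\Lambda$; since $\{\Lambda_i\}_{i\in J}\subseteq\Gamma$ and $\equiv_{\{\Lambda_i\}_{i\in J}}\subseteq\equiv_{\Gamma}$, we get $\beta\equiv_{\Gamma}1$, that is $\Gamma\vdash\Lambda$.

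The one point requiring care — and the step I expect to be the main obstacle — is making the ``finite witness'' claim fully rigorous: namely that $x\equiv_{\Gamma}y$ implies $x\equiv_{\Gamma'}y$ for some finite $\Gamma'\subseteq\Gamma$. This is standard for congruences on an algebra generated by a set of pairs (the congruence generated by a set $R$ of pairs is the directed union of the congruences generated by the finite subsets of $R$, because a single chain of elementary transitions uses only finitely many pairs), but since $S_c[X]$ is itself a direct limit one should check the argument does not secretly need more than the limit structure already provides. In fact it does not: a deduction $\beta\equiv_{\Gamma}1$ lives inside some $S_c[X_M]$ for $M$ large enough (after expressing the finitely many relevant $\Lambda_i$ and $\Lambda$ with a common index $M$, as in the proof of Theorem \ref{mainresult} and in Corollary \ref{infinitejioninclude}), and there the claim is the elementary fact about finitely generated congruences. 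Once this reduction is in place, the rest is a direct citation of Theorem \ref{mainresult}, so the corollary follows.
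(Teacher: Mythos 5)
Your proposal is correct and follows essentially the route the paper intends: the corollary is stated without proof as an immediate consequence of Theorem \ref{mainresult}, with the reduction to a finite subset supplied by the compactness of deduction (Proposition \ref{deduced}, equivalently the fact that $\equiv_{\Gamma}$ is the directed union of the $\equiv_{\Gamma'}$ over finite $\Gamma'\subseteq\Gamma$), and the converse by monotonicity of $\equiv_{\Gamma'}\subseteq\equiv_{\Gamma}$. Your extra care about the finite-witness claim inside the direct limit is sound but does not change the argument.
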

\begin{cor}\label{infiniteinclude}
   
  Let $\Gamma=\{\Lambda_i\}_{i\in I}\subseteq S_c[X]$, where $\Lambda_i=\overline{(\alpha_i,N_i)}$ and $\alpha_i\in S_c[X_{N_i}]$. Let $\Lambda=\overline{(\beta,N)}$ with $\beta\in S_c[X_N]$. If $\Gamma\vdash\Lambda$, then $\bigcap\limits_{i\in I}A_{\alpha_i}\subseteq A_{\beta}$.
\end{cor}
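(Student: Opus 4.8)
The plan is to derive this directly from Corollary \ref{iffofinfinite}, which already characterizes $\Gamma\vdash\Lambda$ in terms of the existence of a \emph{finite} witnessing subset $J\subseteq I$ with $\bigcap_{i\in J}A_{\alpha_i}\subseteq A_{\beta}$. Since the desired conclusion concerns the intersection over \emph{all} of $I$, the only thing to observe is the monotonicity of intersection under enlarging the index set: a larger intersection is contained in a smaller one.

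First I would invoke Corollary \ref{iffofinfinite}: from $\Gamma\vdash\Lambda$ we obtain a finite subset $J\subseteq I$ such that $\bigcap_{i\in J}A_{\alpha_i}\subseteq A_{\beta}$. Next, since $J\subseteq I$, for every element $x$ lying in $\bigcap_{i\in I}A_{\alpha_i}$ we have in particular $x\in A_{\alpha_i}$ for all $i\in J$, hence $x\in\bigcap_{i\in J}A_{\alpha_i}$; this gives the inclusion $\bigcap_{i\in I}A_{\alpha_i}\subseteq\bigcap_{i\in J}A_{\alpha_i}$. Chaining this with the inclusion furnished by Corollary \ref{iffofinfinite} yields
\[
\bigcap_{i\in I}A_{\alpha_i}\ \subseteq\ \bigcap_{i\in J}A_{\alpha_i}\ \subseteq\ A_{\beta},
\]
which is exactly the assertion.

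There is essentially no obstacle here: the statement is the "easy direction" of Corollary \ref{iffofinfinite}, weakened from a finite-subset condition to the full intersection, and it follows purely from set-theoretic monotonicity. The only point worth a remark is that the converse fails in general — $\bigcap_{i\in I}A_{\alpha_i}\subseteq A_{\beta}$ need not imply $\Gamma\vdash\Lambda$, since deduction in $S_c[X]$ is finitary (by Proposition \ref{deduced} and Corollary \ref{iffofinfinite}) whereas an infinite intersection can shrink below $A_\beta$ without any finite subintersection doing so; so Corollary \ref{infiniteinclude} is stated, correctly, only as a one-directional implication.
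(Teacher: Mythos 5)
Your proof is correct and matches the paper's intended argument: the paper states this corollary without proof precisely because it follows immediately from Corollary \ref{iffofinfinite} together with the monotonicity of intersections, exactly as you argue. Your closing remark on the failure of the converse is also consistent with the paper's subsequent Remark.
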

\begin{rem}
  The converse of Corollary \ref{infiniteinclude} is not true. Let $\Gamma=\{\Lambda_i\}_{i\in \mathbb{N}^+}\subseteq S_c[X]$, $\Lambda_i=\overline{(x_i,i)}$, $\Lambda=\overline{(x_1\cdot x_1^c,1)}$,  $\bigcap\limits_{i=1}^{\infty}A_{x_i}=\bigcap\limits_{i=1}^{\infty}A_{i}=\emptyset$, and for any $k\in \mathbb{N},\ \bigcap\limits_{i=1}^{k}A_{i}\neq\emptyset $.  But $x_1\cdot x_1^c=\theta$, The converse of Corollary \ref{infiniteinclude} is  false by Corollary \ref{iffofinfinite}.
\end{rem}
\section{Examples}
\begin{exa}

  $\alpha_1:$ $a_1$ is an odd number or $a_2$ is an even number,

  $\alpha_2:$ If $a_1$ is an even number, then $a_3$ and $a_4$ are all even numbers,

  $\alpha_3:$ If $a_4$ is an even number, then $a_2$ is an  even number.

   $\beta:$  At least one of $a_2$ and $a_3$ is an even number.

  Let $x_i:$ $a_i$ is an even number ($i=1,2,3,4$), $\Gamma=\{\alpha_1=x_1\circ x_2,\ \alpha_2=x_1^c\circ (x_3\cdot x_4),\ \alpha_3=x_4^c\circ x_2\},\ \beta=x_2\circ x_3$.

  For $X$ is a finite set, we denote $A_{\alpha}^{(4)}$ as $A_{\alpha}$ without confusion, then $A_{\alpha_1}=A_1\cup A_2,\ A_{\alpha_2}=A_2^c\cup (A_3\cap A_4),\ A_{\alpha_3}=A_4^c\cup A_2, A_{\beta}=A_2\cup A_3$.

  $(A_1\cup A_2)\cap (A_2^c\cup (A_3\cap A_4))\cap (A_4^c\cup A_2)=(A_1\cup A_2)\cap (A_2^c\cup A_3)\cap (A_2^c\cup A_4))\cap (A_4^c\cup A_2)$$\subseteq A_2\cup(A_1\cap A_3)\cup (A_1\cap A_2^c\cap A_4^c)\subseteq A_2\cup A_3 =A_{\beta}$

  Therefore $\Gamma\vdash\beta$.
  \end{exa}
  \begin{exa}

    Consider a criminal case involving four individuals denoted as $P_1, P_2, P_3, P_4$. The following clues pertain to the case:

1. If $P_1$ and $P_2$ are not  perpetrators, then $P_3$ and $P_4$ are not  perpetrators.

2. If $P_3$ and $P_4$ are not  perpetrators, then $P_1$ and $P_2$ are not  perpetrators.

3. If $P_1$ and $P_2$ are  perpetrators, then one and only one of $P_3$ and $P_4$ is a perpetrator.

4. If $P_2$ and $P_3$ are  perpetrators, then either both $P_1$ and $P_4$ are perpetrators, or neither $P_1$ nor $P_4$ is a perpetrator.

5. At least one of $P_1, P_2, P_3, P_4$ is a perpetrator.

The question arises: Who are the perpetrators?

   Let $x_i:$ $P_i$ is a perpetrator ($i=1,2,3,4$), $\Gamma=\{\alpha_1=(x_1^c\cdot x_2^c)^c\circ (x_3^c\cdot x_4^c),\ \alpha_2=(x_3^c\cdot x_4^c)^c\circ (x_1^c\cdot x_2^c),\ \alpha_3=(x_1\cdot x_2)^c\circ ((x_3\circ x_4)\cdot (x_3\cdot x_4)^c),\alpha_4=(x_2\cdot x_3)^c\circ((x_1\cdot x_4)\circ(x_1^c\cdot x_4^c)),\alpha_5=x_1\circ x_2\circ x_3\circ x_4 \}$=$\{\alpha_1=x_1\circ x_2\circ (x_3^c\cdot x_4^c),\ \alpha_2=(x_1^c\cdot x_2^c)\circ x_3\circ x_4,\ \alpha_3=x_1^c\circ x_2^c\circ (x_3\cdot x_4^c)\circ (x_3^c\cdot x_4),\alpha_4=x_2^c\circ x_3^c\circ(x_1\cdot x_4)\circ(x_1^c\cdot x_4^c),\alpha_5=x_1\circ x_2\circ x_3\circ x_4 \}$.

 Then $Q=\bigcap\limits_{i=1}^5A_{\alpha_i}=\{5,6,9,11,13\}$. It is easy to find that $Q$ is not a subset of $A_{i}$ or $A_{i}^c\ (i=1,2,3,4)$, so no one can be  identified as a perpetrator, and no one can be cleared of suspicion. Furthermore, we can get $Q\subseteq A_1\cup A_2=\{1, 2, 3, 5, 6, 7, 9, 10, 11, 13, 14, 15\}$, $Q\subseteq A_3\cup A_4=\{4, 5, 6, 7, 8, 9, 10, 11, 12, 13, 14, 15\}$, and $Q\subseteq A_1^c\cup A_2^c\cup A_3^c=\{0, 1, 2, 3, 4, 5, 6, 8, 9, 10, 11, 12, 13, 14\}$, so there is at least one perpetrator in $P_1$ and $P_2$, at least one perpetrator in $P_3$ and $P_4$, and at least one of $P_1$,$P_2$ and $P_3$ is not a perpetrator.
\end{exa}
\begin{exa}
 A group of people are playing a game together. Each person is randomly given a white or black hat (there is at least one black hat). Everyone can see the color of other people's hats, but they cannot see their own hat. The host says for everyone to guess the color of their own hat based on the colors of the hats they can see, then turns off the lights. If someone thinks they are wearing a black hat, they shout it out. If no one reports, turn on the lights and let everyone observe, then turn off the lights again.
No one shouted in the first round of darkness; after turning on the lights and going into the second round of darkness, there was still no movement; it wasn't until the third round of darkness that someone finally shouted out. How many black hats are there?

For solve this problem, let's first prove a conclusion: If someone sees $n$ black hats and no one reports after the $n$-th round of darkness, then someone will  report in the ($n+1$)-th round. We will prove this conclusion by using induction.

Let $\alpha_n$: someone sees $n$ black hats,

$\beta_n$:  someone report in the $n$-th round.

It is easy to see that $\beta_n\rightarrow \beta_{n+1},\ \alpha_{n+1}\cdot(\alpha_n\rightarrow\beta_{n+1})\cdot\beta_{n+1}^c\rightarrow\beta_{n+2}$ and $\alpha_0\rightarrow\beta_1$, we assume that $\alpha_n\cdot\beta_n^c\rightarrow\beta_{n+1}$, we prove that $\alpha_{n+1}\cdot\beta_{n+1}^c\rightarrow\beta_{n+2}$.

Denote $x_1=\alpha_n,\ x_2=\alpha_{n+1},\ x_3=\beta_n,\ x_4=\beta_{n+1},\ x_5=\beta_{n+2}$. Then $\Gamma=\{x_1^c\circ x_3^c\circ x_4,x_3^c\circ x_4, x_1\cdot x_4^c\circ x_4\circ x_2^c\circ x_5\},\ \beta=x_2^c\circ x_4\circ x_5$.

Hence $Q=(A_1^c\cup A_3\cup A_4)\cap(A_3^c\cup A_4)\cap((A_1\cap A_4^c)\cup(A_2^c\cup A_4\cup A_5)),\ A_{\beta}=A_2^c\cup A_4\cup A_5$, since $(A_1^c\cup A_3\cup A_4)\cap(A_3^c\cup A_4)\cap(A_1\cap A_4^c)=\varnothing$, so $Q\subseteq A_{\beta}$. The conclusion is true for any $n$ by induction.

Therefore  there are at least 3 black hats by the conclusion easily.
\end{exa}
\begin{exa}\cite{EFT2021}
  A group $G$ is a triple $(G,\ \cdot,\ e)$ which satisfies (G1)-(G3):

  (G1) For all $x,y,z$: $(x\cdot y)\cdot z=x\cdot (y\cdot z)$.

  (G2) For all x, $x\cdot e=x$.

  (G3) For every $x$ there is a $y$ such that $x\cdot y=e$.

 For every $x$ is there a $y$ such that $y\cdot x=e$?

 We denote $X_{xyz}: x\cdot y=z$($X_{ijk}: x_i\cdot x_j=x_k$ for a countable group), then  (G1)-(G3) can be rewritten as following:

 (G1) $\prod\limits_{x,y,z,a,b,c}(X_{xyz}\cdot X_{yab}\cdot X_{xbc}\rightarrow  X_{zac})$, $\prod\limits_{x,y,z,a,b,c}(X_{xyz}\cdot X_{yab}\cdot X_{zac}\rightarrow  X_{xbc})$.

 (G2) $\prod\limits_{x}X_{xex}$.

 (G3) $\prod\limits_{x}(\mathop{\sum^{\circ}}\limits_{y}X_{xye})$.

 The question can be denote as (Q):$\prod\limits_{y}(\mathop{\sum^{\circ}}\limits_{x}X_{xye})$.

 For a finite group of order $n$, we denote $e$ as $x_0$ and $X_{ijk}$ as $X_{i+j\times n+k\times n^2+1}$.

 If $n=2$, $A_{G1}\cap A_{G2}\cap A_{G3}=\{105,255\}\subseteq A_{Q}=(A_1\cup A_2)\cap(A_3\cup A_4)$, so (Q) is true. But when $n>2$, the calculation becomes infeasible.
\end{exa}
\section*{\textbf{Acknowledgment}}
This work is supported by the National Social Science Fundation of China (No. 21BJY213),
and also by NSF of Anhui University, China (No. KJ2021A0386)

\end{document}